\newtheorem{df}{Definition}[section]
\newtheorem{thm}[df]{Theorem}
\newtheorem{pro}[df]{Proposition}
\begin{document}
\setcounter{page}{1}

\title[Semi-Browder Joint Spectra]{Tensor Products And The \\ 
Semi-Browder Joint Spectra}

\author{Enrico Boasso}

\begin{abstract} Given two complex Banach spaces $X_1$ and $X_2$, 
a tensor product of $X_1$ and $X_2$, $X_1\tilde{\otimes}X_2$, 
in the sense of J. Eschmeier ([5]), and two
finite tuples of commuting operators, $S=(S_1,\ldots ,S_n)$ and 
$T=(T_1,\ldots ,T_m)$, defined on $X_1$ and $X_2$ respectively,
we consider the $(n+m)$-tuple of operators defined on 
$X_1\tilde{\otimes}X_2$, $(S\otimes I,I\otimes T)=
(S_1\otimes I,\ldots ,S_n\otimes I,I\otimes T_1,\ldots ,I
\otimes T_m)$, and 
we give a description of the semi-Browder joint spectra introduced by 
V. Kordula, V. M\"uller and V. Rako$\check{c}$evi$\acute{ c}$ in [7]
and of the split semi-Browder joint spectra (see section 3), of the
$(n+m)$-tuple $(S\otimes I ,I\otimes T)$, in terms of the corresponding
joint spectra of $S$ and $T$. This result is in some sense a
generalization of a formula obtained for other various Browder 
spectra in Hilbert spaces and for tensor products of operators and for tuples of  
the form $(S\otimes I ,I\otimes T)$. In addition, we also describe all the
mentioned joint spectra for a tuple of left and right multiplications
defined on an
operator ideal between Banach spaces in the sense of [5]. \end{abstract}

\maketitle

\section{Introduction}
\noindent Given a complex Banach space $X$, V. Kordula, V. M\"uller and V. Rako$\check{c}$evi$\acute{ c}$ 
extended in [7] the notion of upper and lower semi-Browder spectrum of an operator
to $n$-tuples 
of commuting operators, and they proved the main spectral properties for this joint spectra,
i.e., the compactness, nonemptiness, the projection property and the spectral mapping property.\par

\indent On the other hand, there are other many joint Browder spectra, 
for example, we may consider the one introduced by R. E. Curto and A. T. Dash in [2], 
$\sigma_{b}$, and the joint Browder spectra defined by A. T. Dash in [3], $\sigma_{b}^1$, $\sigma_{b}^2$ and $\sigma_{b}^T$. 
By the observation which follows Definition 4 in [3] and the Example in [7],
we have that the Browder spectra of V. Kordula, V. M\"uller and V. Rako$\check{c}$evi$\acute{ c}$,
$\sigma_{{\mathcal B}_{+}}$ and $\sigma_{{\mathcal B}_{-}}$,
differ, in general, from the other mentioned joint Browder spectra. However, if we consider
two complex Hilbert spaces $H_1$ and $H_2$, and $S$ and $T$ two operators 
defined on $H_1$ and $H_2$ respectively, by [2] and [3, Theorem 7] we have that the joint 
Browder spectra $\sigma_{b}$, $\sigma_{b}^1$, $\sigma_{b}^2$ and $\sigma_{b}^T$
of the tuple of operators $(S\otimes I,I\otimes T)$ defined on $H_1\overline{\otimes} H_2$, coincide with the set
$$
\sigma_{b}(S)\times \sigma(T)\cup\sigma(S)\times\sigma_{b}(T),
$$
where $\sigma$ and $\sigma_{b}$ denote, respectively, the usual and the Browder spectrum of
an operator. \par
\indent Moreover, if $S=(S_1 ,\ldots ,S_n)$, respectively $T=(T_1 ,\ldots ,T_m)$,
is an $n$-tuple, respectively an $m$-tuple, of commuting operators defined on the Hilbert space $H_1$, 
respectively $H_2$, R. E. Curto and A. T. Dash computed in [2] the Browder spectum
of the $(n+m)$-tuple $(S\otimes I,I\otimes T)=(S_1\otimes I,\ldots ,S_n\otimes I,I\otimes T_1,\ldots ,I\otimes T_m)$,
and they obtained the formula
$$
\sigma_{b}(S\otimes I,I\otimes T)=\sigma_b(S)\times\sigma_T(T)\cup\sigma_T(T)\times\sigma_b(T),
$$
where $\sigma_T$ denotes the Taylor joint spectrum (see [9]).\par

\indent In this article we give in some sense a generalization of the above
formulas for commutative tuples of Banach spaces operators and for the semi-Browder joint
spectra. Indeed, we
consider two complex Banach spaces, $X_1$
and $X_2$, a tensor product between
$X_1$ and $X_2$ in the sense of J. Eschmeier ([5]) $X_1\tilde{\otimes} X_2$, $S$ and $T$, two
commuting tuples of Banach space operators defined on $X_1$ and $X_2$ respectively,
and we describe the semi-Browder
joint spectra introduced in [7], $\sigma_{{\mathcal B}_{+}}$ and 
$\sigma_{{\mathcal B}_{-}}$, and the split semi-Browder joint spectra 
$sp_{{\mathcal B}_{+}}$ and $sp_{{\mathcal B}_{-}}$ (see section 3) of
the tuple $(S\otimes I,I\otimes T)$, in terms of the 
corresponding semi-Browder joint spectra and 
of the defect and the approximate point spectra of $S$ and $T$.  
The results that we have obtained extend in same way the above formulas,
see section 5.
Furthermore, since for our objective we need to know the Fredholm
joint spectra of J.J. Buoni, R. Harte and T. Wickstead of $(S\otimes I,I\otimes T)$ 
([1]) and its split versions ([4]) we also descrive in section 4 these joint 
spectra. \par

\indent In addition, by similar arguments we describe in section 6 all the mentioned joint
spectra for a tuple of left and right multiplications defined on an operator ideal
between Banach spaces in the sense of [5].\par

\indent However, in order to give our descriptions, we need to
introduce the split semi-Browder joint spectra of a tuple of 
commuting Banach space operators, and to prove their main spectral
properties (see section 3).\par
 
\indent The article is organized as follows. In section 2 we recall several
definitions and results which we need for our work. In section 3 we 
introduce the split semi-Browder joint spectra and prove their main
spectral properties. In section 4 we
compute the semi-Fredholm joint spectra of $(S\otimes I,I\otimes T)$.
In section 5 we compute the semi-Browder joint spectra of $(S\otimes I,I\otimes T)$,
and in section 6, the semi-Fredholm and the semi-Browder joint  
spectra of a tuple of left and right multiplications defined on an operator ideal between Banach spaces in the 
sense of [5].\par

\section{Preliminaries}
\noindent Let us begin our work by recalling the definitions of the lower 
semi-Fredholm and of the lower semi-Browder joint spectra of a finite tuple of 
operators denfined on a complex Banach space, for a complete exposition see [1] and [7].\par

\indent Let $T=(T_1,\ldots ,T_n)$ be an $n$-tuple of commuting operators defined on 
a Banach space $X$, and for $k\in \mathbb{N}$ define $M_k(T)=R(T_1^k)+\ldots +R(T_n^k)$. 
Clearly $X\supseteq M_1(T)\supseteq M_2(T)\supseteq\ldots\supseteq M_k(T)\supseteq\ldots $
Let us set $R^{\infty}(T)=\cap_{k=1}^{\infty}M_k(T)$. We now may recall the
definition of the lower semi-Browder joint spectrum (see [7]).\par

We say that $T=(T_1,\ldots ,T_n)$ is \it lower semi-Browder \rm if 
codim $R^{\infty}(T)<\infty$. The set of all lower semi-Browder $n$-tuples 
is denoted by ${\mathcal B}_{-}^{(n)}(X)$, and the \it lower semi-Browder spectrum \rm is 
the set
$$
\sigma_{{\mathcal B}_{-}}(T)=\{\lambda\in\mathbb{C}^n:\hbox{ } T-\lambda\notin {\mathcal B}_{-}^{(n)}(X)\},
$$
where $\lambda=(\lambda_1,\ldots ,\lambda_n)$ and $T-\lambda=
(T_1-\lambda_1 I,\ldots ,T_n-\lambda_n I)$. 
 
\indent As usual (see [1]), we say that $T=(T_1,\ldots ,T_n)$ is \it lower semi-Fredholm, \rm
i.e., $T\in\Phi_{-}^{(n)}(X)$, if 
$$
\hbox{\rm codim } M_1(T)=\hbox{\rm codim } (R(T_1)+\ldots + R(T_n))<\infty,
$$
equivalently, if the operator $\hat{T}\colon X^n\to X$ defined by
$\hat{T}(x_1,\ldots ,x_n)=T_1(x_1)+\ldots +T_n(x_n)$ is lower semi-Fredholm,
i.e., $R(\hat{T})$ is closed and has finite codimension. The lower semi-Fredholm spectrum is the set
$$
\sigma_{\Phi_{-}}(T)=\{\lambda\in\mathbb{C}^n:\hbox{ } T-\lambda\notin \Phi_{-}^{(n)}(X)\}.
$$

\indent An easy calculation shows that
$$
\sigma_{\Phi_{-}}(T)\subseteq\sigma_{{\mathcal B}_{-}}(T)\subseteq\sigma_{\delta}(T),
$$
where $\sigma_{\delta}(T)$ is the defect spectrum of $T$, i.e.,
$$
\sigma_{\delta}(T)=\{\lambda\in\mathbb{C}^n:\hbox{ } \hbox{ codim }M_1(T-\lambda)\ne 0\}.
$$
\indent Moreover, it is easy to see that the lower semi-Browder spectrum may be decomposed as
the disjoint union of two sets, 
$$
\sigma_{{\mathcal B}_{-}}(T)=\sigma_{\Phi_{-}}(T)\cup {\mathcal A(T)},
$$
where 
$$
{\mathcal A(T)}=\{\lambda\in \mathbb{C}^n:\hbox{} \forall k\in\mathbb{N}, 1\le \hbox{\rm codim }M_k(T-\lambda)<\infty,
\hbox{\rm codim }M_k(T-\lambda)\xrightarrow[k\to\infty]{ }\infty\}.
$$

\indent Now, we recall the definition of the upper semi-Fredholm and the upper 
semi-Browder joint spectra, as above, for a complete exposition see [1] and [7].\par

\indent If $T$ is an $n$-tuple of commuting operators defined on a Banach space
$X$, then $T$ is said \it upper semi-Fredholm, \rm  i.e., $T\in\Phi_{+}^{(n)}(X)$, if the map
$\tilde{T}\colon X\to X^n$ defined by $\tilde{T}(x)=(T_1(x),\ldots ,T_n(x))$
is upper semi-Fredholm,
equivalently, if $\tilde{T}$ has finite dimensional null space and closed range. Moreover, $T$ is said \it upper semi-Browder, \rm  i.e., $T\in{\mathcal B}_{+}^{(n)}(X)$, 
if $T\in \Phi_{+}^{(n)}(X)$ and
$\dim N^{\infty}(T)<\infty$, where
$$
N^{\infty}(T)=\cup_{k\in\mathbb{N}}[N(T_1^k)\cap\ldots\cap N(T_n^k) ].
$$

\indent As above,  the \it upper semi-Fredholm spectrum  \rm is the set
$$
\sigma_{\Phi_{+}}(T)=\{\lambda\in\mathbb{C}^n: T-\lambda\notin \Phi_{+}^{(n)}(X)\}, 
$$
and the \it upper semi-Browder spectrum \rm is the set
$$
\sigma_{{\mathcal B }_{+}}(T)=\{\lambda\in\mathbb{C}^n: T-\lambda\notin {\mathcal B}_{+}^{(n)}(X)\}. 
$$

\indent In addition, it is easy to see that
$$
\sigma_{\Phi_{+}}(T)\subseteq\sigma_{{\mathcal B}_{+}}(T)\subseteq\sigma_{\pi}(T),
$$
where $\sigma_{\pi }(T)$ denotes the approximate point spectrum of $T$,
$$
\sigma_{\pi}(T)=\{\lambda\in\mathbb{C}^n:\hbox{ } N(\tilde{T-\lambda})\ne 0\, \hbox{ or } R(\tilde{T-\lambda}) 
\hbox{ is not closed }\}.
$$

\indent Moreover, it is easy to see that the upper semi-Browder spectrum may be decomposed as
the disjoint union of two sets, 
$$
\sigma_{{\mathcal B}_{+}}(T)=\sigma_{\Phi_{+}}(T)\cup {\mathcal D(T)},
$$
where 
$
{\mathcal D(T)}=\{\lambda\in \mathbb{C}^n:\forall k\in\mathbb{N}, 1\le \dim N_k(\tilde{T-\lambda})<\infty,\hbox{ } R(\tilde{T-\lambda})\hbox{ is}$ 
$\hbox{closed, and }\dim N_k(\tilde{T-\lambda})\xrightarrow[k\to\infty]{ }\infty \}
$, where 
$N_k(\tilde{T-\lambda})=N(\tilde{(T-\lambda)}^k)$ and $(T-\lambda)^k=((T_1-\lambda_1)^k,\ldots , (T_n-\lambda_n)^k)$.\par

\indent Let us recall that the semi-Fredholm and the semi-Browder joint spectra are compact nonempty subsets of $\mathbb{C}^n$,
which also satisfy the projection property and the analytic spectral 
mapping theorem for tuples of holomorphic functions
defined on a neighborhood of the Taylor joint spectrum [9] (see [4] and [7]).\par 

\indent On the other hand, in order to prove our main results, we have to recall the axiomatic tensor product between 
Banach spaces introduced by J. Eschmeier in [5]. This notion will be central in 
this work. For a complete exposition see [5]. We proceed as follows.\par

\indent A pair $<X, \tilde{X}>$ of Banach spaces will be called a \it dual 
pairing, \rm if
$$
(A)\hbox{  }\tilde{X}=X{'} \hbox{   or   }(B)\hbox{  }X=
\tilde{X}{'}.
$$

\indent In both cases, the canonical bilinear mapping is denoted by
$$
X\times\tilde{X}\to\mathbb{C} ,\hbox{   }(x,u)\to<x,u>.
$$

\indent If $<X,\tilde{X}>$ is a dual pairing, we consider the 
subalgebra ${\mathcal L}(X)$ of ${\rm L}(X)$ consisting of all 
operators $T\in{\rm L}(X)$ for which there is an operator $T{'}
\in {\rm L}(\tilde{X})$ with
$$
<Tx,u>=<x,T{'}u>,
$$
for all $x\in X$ and $u\in\tilde{X}$. It is clear that if the dual pairing 
is $<X,X{'}>$, then ${\mathcal L}(X)={\rm L}(X)$, and that if the 
dual pairing is $<X{'},X>$, then ${\mathcal L}(X)=\{T^*\colon T
\in {\rm L}(\tilde{X})\}$. In particular, each operator of the form
$$
f_{y,v}\colon X\to X,\hbox{}x\to<x,v>y,
$$
is contained in ${\mathcal L}(X)$, where $y\in X$ and $v\in\tilde{X}$.
\par 

\indent We now recall the definition of the tensor product given by J. 
Eschmeier in [5].\par

 Given two dual pairings $<X,\tilde{X}>$ and 
$<Y,\tilde{Y}>$, a tensor product of the Banach spaces $X$ and $Y$ 
relative to the dual pairings $<X,\tilde{X}>$ and 
$<Y,\tilde{Y}>$, is a Banach space $Z$ together with 
continuous bilinear mappings
\begin{align*}
&X\times Y\to Z,\hbox{}(x,y)\to x\otimes y;\\
&{\mathcal L}(X)
\times{\mathcal L}(Y)\to{\rm L}(Z),\hbox{}(T,S)\to T\otimes S,\\
\end{align*}
which satisfy the following conditions,

(T1)$\hbox{  }\parallel x\otimes y\parallel=\parallel x\parallel
\parallel y\parallel$,\par
(T2)$\hbox{  }T\otimes S(x\otimes y)=(Tx)\otimes(Sy)$,\par
(T3)$\hbox{  }(T_1\otimes S_1)\circ (T_2\otimes S_2)=(T_1T_2)
\otimes(S_1S_2),\hbox{}I\otimes I=I$,\par
(T4) $\hbox{\rm Im}(f_{x,u}\otimes I)\subseteq\{x\otimes y\colon y
\in Y\},\hbox{\rm Im}(f_{y,v}\otimes I)\subseteq\{x\otimes y\colon x
\in X\}$.\par

\indent In this work, as in [5], instead of $Z$ we shall often write 
$X\tilde{\otimes} Y$. In addition, as in [5], we shall have two 
applications of this definition of tensor product. 
First of all, the completion $X\tilde{\otimes_{\alpha}}Y$ of the 
algebraic tensor product of 
Banach spaces $X$ and $Y$ with respect to a quasi-uniform crossnorm 
$\alpha$ (see [6]) and an operator ideal between Banach spaces (see 
[5] and section 6).\par

\indent In section 4 and 5, given two complex Banach spaces $X_1$ and 
$X_2$, and two tuples of 
Banach spaces operators, $S$ and $T$, defined on $X_1$ and $X_2$ 
respectively, we shall describe the semi-Fredholm and the 
semi-Browder joint spectra of the tuple $(S\otimes I,I\otimes T)$,
whose operators, $S_i\otimes I$ and $I\otimes T_j$, $i=1,\ldots ,n$
and $j=1,\ldots ,m$, are defined on $X_1\tilde{\otimes }X_2$,
a tensor product of $X_1$ and $X_2$ relative to $<X_1,X_1{'}>$
and $<X_2,X_2{'}>$. However, in the following section,
we first introduce the split semi-Browder joint spectra, which will be necessary for our description.\par
  
\section{The split semi-Browder joint spectra}

\noindent In this section we introduce the upper and lower split semi-Browder joint spectra. We also prove
their main spectral properties. \par

\indent Let us consider, as in section 2, a complex Banach space $X$ and $T=(T_1,\ldots ,T_n)$ a commuting
tuple of operators defined on $X$. We say that
$T$ is \it lower split semi-Browder \rm if $R^{\infty}(T)$ has finite codimension and $N(\hat{T})$ has a direct
complement in $X^n$, where $\hat{T}\colon X^n\to X$ is the map considered in section 2. We denote by $\mathcal{SB}^{(n)}_{-}(X)$
the set of all lower split semi-Browder $n$-tuples, and the \it lower split semi-Browder spectrum \rm is the set
$$
sp_{{\mathcal B}_{-}}(T)=\{\lambda\in\mathbb{C}^n\colon T-\lambda\notin {\mathcal SB}^{(n)}_{-}(X)\}.
$$

\indent It is clear that
$$
sp_{{\mathcal B}_{-}}(T)=\sigma_{{\mathcal B}_{-}}(T)\cup {\mathcal C}_{-}(T),
$$
where ${\mathcal C}_{-}(T)=\{\lambda\in\mathbb{C}^n\colon N(\hat{T-\lambda})\hbox{ has not a direct complement in
 }X^n\}$. In particular, $sp_{{\mathcal B}_{-}}(T)$ is a nonempty set.\par

\indent On the other hand, if we consider the split defect spectrum and the essential split
defect spectrum of $T$ introduced in [4], $sp_{\delta}(T)$ and $sp_{\delta e}(T)$ respectively, 
sets that by [4, Theorem 2.7] may be presented as
$$
sp_{\delta}(T)=\sigma_{\delta}(T)\cup  {\mathcal C}_{-}(T),\hskip1cm sp_{\delta e}(T)=\sigma_{\Phi_{-}}(T)\cup  {\mathcal C}_{-}(T),
$$
then we have that
$$
sp_{\delta e}(T)\subseteq sp_{{\mathcal B}_{-}}(T)\subseteq sp_{\delta}(T).
$$
\indent In addition, if we consider the set $\tilde{{\mathcal A}}(T)=\{\lambda\in\mathbb{C}^n\colon \lambda\notin 
sp_{\delta  e}(T),  \forall k\in\mathbb{N}, 1\le \hbox{\rm codim} M_k(T-\lambda)<\infty ,\hbox{\rm codim} M_k(T-\lambda) \xrightarrow[k\to\infty]{} \infty \}$,
then it is clear that
$$
\tilde{{\mathcal A}}(T)\subseteq {\mathcal A}(T)\subseteq \sigma_{{\mathcal B}_{-}}(T)\subseteq sp_{{\mathcal B}_{-}}(T).
$$
\indent In particular 
$$
sp_{\delta e}(T)\cup \tilde{{\mathcal A}}(T)\subseteq  sp_{{\mathcal B}_{-}}(T).
$$

\indent On the other hand, let us consider $\lambda\in sp_{{\mathcal B}_{-}}(T)$, and let us decompose
the lower split semi-Browder spectrum of $T$ as
$$
sp_{{\mathcal B}_{-}}(T)=\sigma_{{\mathcal B}_{-}}(T)\cup{\mathcal C}_{-}(T)=\sigma_{{\Phi}_{-}}(T)\cup{\mathcal A}_{-}(T)\cup {\mathcal C}_{-}(T).
$$
Now, if $\lambda\in \sigma_{{\Phi}_{-}}(T)\cup{\mathcal C}_{-}(T)$, then $\lambda\in sp_{\delta e}(T)$. Moreover,
if $\lambda\in{\mathcal A}(T)\setminus(\sigma_{{\Phi}_{-}}(T)\cup{\mathcal C}_{-}(T))$, then $\lambda\in
{\mathcal A}(T)\setminus sp_{\delta e}(T)= \tilde{{\mathcal A}}(T)$. Thus, we have that
$$
sp_{{\mathcal B}_{-}}(T)= sp_{\delta e}(T)\cup \tilde{{\mathcal A}}(T).
$$

\indent We now introduce the upper split semi-Browder spectrum.\par

\indent If $X$ and $T=(T_1,\ldots ,T_n)$ are as above, then we say that
$T$ is \it upper split semi-Browder \rm  if it is upper semi-Browder and $R(\tilde{T})$ has a direct
complement in $X^n$, where $\tilde{T}\colon X\to X^n$ is the map considered in section 2. We denote by $\mathcal{SB}^{(n)}_{+}(X)$
the set of all upper split semi-Browder $n$-tuples, and the \it upper split semi-Browder spectrum \rm is the set
$$
sp_{{\mathcal B}_{+}}(T)=\{\lambda\in\mathbb{C}^n\colon T-\lambda\notin {\mathcal SB}^{(n)}_{+}(X)\}.
$$

\indent It is clear that
$$
sp_{{\mathcal B}_{+}}(T)=\sigma_{{\mathcal B}_{+}}(T)\cup {\mathcal C}_{+}(T),
$$
where ${\mathcal C}_{+}(T)=\{\lambda\in\mathbb{C}^n\colon R(\tilde{T-\lambda})\hbox{ has not a direct complement in
 }X^n\}$. In particular, $sp_{{\mathcal B}_{+}}(T)$ is a nonempty set.\par

\indent On the other hand, if we consider the split approximate point spectrum and the essential split
approximate point spectrum of $T$ (see [4]), $sp_{\pi}(T)$ and $sp_{\pi e}(T)$ respectively, i.e.,
the sets
$$
sp_{\pi}(T)=\sigma_{\pi}(T)\cup  {\mathcal C}_{+}(T),\hskip1cm sp_{\pi e}(T)=\sigma_{\Phi_{+}}(T)\cup  {\mathcal C}_{+}(T),
$$
then we have that
$$
sp_{\pi e}(T)\subseteq sp_{{\mathcal B}_{+}}(T)\subseteq sp_{\pi}(T).
$$
\indent In addition, if we consider the set $\tilde{{\mathcal D}}(T)=\{\lambda\in\mathbb{C}^n\colon \lambda\notin 
sp_{\pi  e}(T), \forall k\in\mathbb{N}, 1\le \dim N_k\tilde{(T-\lambda)}<\infty , \dim N_k\tilde{(T-\lambda)}\xrightarrow[k\to\infty]{ }\infty \}$,
then it is clear that
$$
\tilde{{\mathcal D}}(T)\subseteq {\mathcal D}(T)\subseteq \sigma_{{\mathcal B}_{+}}(T)\subseteq sp_{{\mathcal B}_{+}}(T).
$$
\indent In particular 
$$
sp_{\pi e}(T)\cup \tilde{{\mathcal D}}(T)\subseteq  sp_{{\mathcal B}_{+}}(T).
$$

\indent On the other hand, let us consider $\lambda\in sp_{{\mathcal B}_{+}}(T)$, and let us decompose
the upper split semi-Browder spectrum of $T$ as
$$
sp_{{\mathcal B}_{+}}(T)=\sigma_{{\mathcal B}_{+}}(T)\cup{\mathcal C}_{+}(T)=\sigma_{{\Phi}_{+}}(T)\cup{\mathcal D}_{+}(T)\cup{\mathcal C}_{+}(T).
$$
Now, if $\lambda\in \sigma_{{\Phi}_{+}}(T)\cup{\mathcal C}_{+}(T)$, 
then $\lambda\in sp_{\pi e}(T)$. Moreover,
if $\lambda\in{\mathcal D}(T)\setminus(\sigma_{{\Phi}_{+}}(T)\cup{\mathcal C}_{+}(T))$, 
then $\lambda\in
{\mathcal D}(T)\setminus sp_{\pi e}(T)= \tilde{{\mathcal D}}(T)$. Thus, we have that
$$
sp_{{\mathcal B}_{+}}(T)= sp_{\pi e}(T)\cup \tilde{{\mathcal D}}(T).
$$

\indent We now see that the sets that we have introduced satisfy the main spectral properties.\par

\begin{pro} Let $X$ be a complex Banach space and $T=(T_1,\ldots ,T_n)$ a commuting tuple
of bounded linear operators defined on $X$. Then the sets $sp_{{\mathcal B}_{-}}(T)$ and $sp_{{\mathcal B}_{+}}(T)$
are compact subsets of $\mathbb{C}^n$.\end{pro}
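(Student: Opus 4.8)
The plan is to prove compactness of each of the two sets in the standard way: by showing that it is bounded and that its complement in $\mathbb{C}^n$ is open.

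Boundedness follows at once from the decompositions recorded just before the statement. We have $sp_{{\mathcal B}_{-}}(T)=\sigma_{{\mathcal B}_{-}}(T)\cup{\mathcal C}_{-}(T)$ and $sp_{{\mathcal B}_{+}}(T)=\sigma_{{\mathcal B}_{+}}(T)\cup{\mathcal C}_{+}(T)$. Here $\sigma_{{\mathcal B}_{\pm}}(T)$ are compact by [7], while the presentations $sp_{\delta e}(T)=\sigma_{\Phi_{-}}(T)\cup{\mathcal C}_{-}(T)$ and $sp_{\pi e}(T)=\sigma_{\Phi_{+}}(T)\cup{\mathcal C}_{+}(T)$ give ${\mathcal C}_{-}(T)\subseteq sp_{\delta e}(T)$ and ${\mathcal C}_{+}(T)\subseteq sp_{\pi e}(T)$, and the essential split defect and essential split approximate point spectra are compact by [4]. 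Thus each $sp_{{\mathcal B}_{\pm}}(T)$ is contained in a bounded set.

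For the openness of the complement of $sp_{{\mathcal B}_{-}}(T)$, I would pick $\lambda_{0}\notin sp_{{\mathcal B}_{-}}(T)$; by the decomposition above this means $\lambda_{0}\notin\sigma_{{\mathcal B}_{-}}(T)$ and $\lambda_{0}\notin{\mathcal C}_{-}(T)$. Since $T-\lambda_{0}$ is lower semi-Browder it is in particular lower semi-Fredholm (recall $\sigma_{\Phi_{-}}(T)\subseteq\sigma_{{\mathcal B}_{-}}(T)$), so $\lambda_{0}\notin\sigma_{\Phi_{-}}(T)$; combining this with $\lambda_{0}\notin{\mathcal C}_{-}(T)$ and the presentation $sp_{\delta e}(T)=\sigma_{\Phi_{-}}(T)\cup{\mathcal C}_{-}(T)$ from [4, Theorem 2.7] gives $\lambda_{0}\notin sp_{\delta e}(T)$. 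Now $\sigma_{{\mathcal B}_{-}}(T)$ is closed by [7] and $sp_{\delta e}(T)$ is closed by [4], so there is a neighborhood $U$ of $\lambda_{0}$ with $U\cap\sigma_{{\mathcal B}_{-}}(T)=\emptyset$ and $U\cap sp_{\delta e}(T)=\emptyset$; the second condition forces $U\cap{\mathcal C}_{-}(T)=\emptyset$, so $U\cap sp_{{\mathcal B}_{-}}(T)=U\cap(\sigma_{{\mathcal B}_{-}}(T)\cup{\mathcal C}_{-}(T))=\emptyset$, i.e., $\lambda_{0}$ is interior to the complement. Hence $sp_{{\mathcal B}_{-}}(T)$ is closed, and together with boundedness it is compact.

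The argument for $sp_{{\mathcal B}_{+}}(T)$ is the exact ``upper'' analogue, replacing $\sigma_{{\mathcal B}_{-}}$, $\sigma_{\Phi_{-}}$, ${\mathcal C}_{-}$ and $sp_{\delta e}$ by $\sigma_{{\mathcal B}_{+}}$, $\sigma_{\Phi_{+}}$, ${\mathcal C}_{+}$ and $sp_{\pi e}$: an upper semi-Browder tuple is upper semi-Fredholm by definition, so for $\lambda_{0}\notin sp_{{\mathcal B}_{+}}(T)$ one gets $\lambda_{0}\notin\sigma_{\Phi_{+}}(T)\cup{\mathcal C}_{+}(T)=sp_{\pi e}(T)$, and one intersects a neighborhood on which both $\sigma_{{\mathcal B}_{+}}(T)$ and $sp_{\pi e}(T)$ are avoided. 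The only place where something substantial is used is the closedness (compactness) of the essential split spectra $sp_{\delta e}(T)$ and $sp_{\pi e}(T)$, which rests on the perturbation theory of split semi-Fredholm tuples and is precisely what is imported from [4]; everything else is bookkeeping with the disjoint-union decompositions.
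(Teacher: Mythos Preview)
Your proof is correct and rests on the same two imported facts as the paper's: the compactness of $\sigma_{{\mathcal B}_{\pm}}(T)$ from [7] and of $sp_{\delta e}(T)$, $sp_{\pi e}(T)$ from [4]. The only difference is packaging: the paper works with the decomposition $sp_{{\mathcal B}_{-}}(T)=sp_{\delta e}(T)\cup\tilde{\mathcal A}(T)$ and proves closedness by a sequential case split (a convergent sequence either has a subsequence in $sp_{\delta e}(T)$, or is eventually in $\tilde{\mathcal A}(T)\subseteq\sigma_{{\mathcal B}_{-}}(T)$), whereas you use $sp_{{\mathcal B}_{-}}(T)=\sigma_{{\mathcal B}_{-}}(T)\cup{\mathcal C}_{-}(T)$ and show the complement is open directly. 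Both arguments amount to the observation that $sp_{{\mathcal B}_{-}}(T)=\sigma_{{\mathcal B}_{-}}(T)\cup sp_{\delta e}(T)$ is a union of two compact sets; your route makes this a bit more transparent.
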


\begin{proof}

\indent Since $sp_{{\mathcal B}_{-}}(T)=sp_{\delta e}(T)\cup\tilde{{\mathcal A}}(T)\subseteq sp_{\delta e}(T)\cup
\sigma_{{\mathcal B}_{-}}(T)$, we have that $sp_{{\mathcal B}_{-}}(T)$ is a bounded subset of $\mathbb{C}^n$.\par

\indent On the other hand, let us consider a sequence $(\lambda_n)_{n\in\mathbb{N}}\subseteq sp_{{\mathcal B}_{-}}(T)$,
and $\lambda\in\mathbb{C}^n$ such that $\lambda_n\xrightarrow[n\to\infty]{ } \lambda$. 
If there exists a subsequence
$(\lambda_{n_k})_{k\in\mathbb{N}}\subseteq sp_{\delta e}(T)$, then $\lambda\in sp_{\delta e}(T)\subseteq 
sp_{{\mathcal B}_{-}}(T)$. Thus, we may suppose that there is $n_0\in\mathbb{N}$ such that for all $n\in \mathbb{N}$,
$n\ge n_0$, $\lambda_n\in \tilde{{\mathcal A}}(T)$. Moreover, we may also suppose that $\lambda\notin sp_{\delta e}(T)$.
In particular, there is an open neighborhood of $\lambda$, $U$, such 
that $U\cap sp_{\delta e}(T)=\emptyset
$,
and there is $n_1\in\mathbb{N}$ such that $\lambda_n\in U$, for all $n\ge n_1$.\par

\indent However, since for all $n\ge n_0$, $\lambda_n\in\tilde{{\mathcal A}}(T)\subseteq {\mathcal A}(T)\subseteq 
\sigma_{{\mathcal B}_{-}}(T)$, then $\lambda\in \sigma_{{\mathcal B}_{-}}(T)$. But $\lambda\notin\sigma_{\Phi_{-}}(T)$,
for $\sigma_{\Phi_{-}}(T)\subseteq sp_{\delta e}(T)$. Then, $\lambda\in {\mathcal A}(T)\setminus sp_{\delta e}(T)=
\tilde{{\mathcal A}}(T)\subseteq sp_{{\mathcal B}_{-}}(T)$.\par

\indent By means of a similar argument, it is possible to see that $sp_{{\mathcal B}_{+}}(T)$ is a compact subset of
$\mathbb{C}^n$.
\end{proof}  

\begin{pro} Let $X$ be a complex Banach space and 
$T=(T_1,\ldots ,T_n, T_{n+1})$ a commuting tuple
of bounded linear operators defined on $X$. 
If $\pi\colon\mathbb{C}^{n+1}\to \mathbb{C}^n$ denotes the projection
onto the first $n$-coordinate, then we have that \par
{\rm (i)} $\pi (sp_{{\mathcal B}_{-}}(T_1,\ldots ,T_n,T_{n+1}))=
sp_{{\mathcal B}_{-}}(T_1,\ldots ,T_n),$ \par
{\rm (ii)} $\pi (sp_{{\mathcal B}_{+}}(T_1,\ldots ,T_n,T_{n+1}))=
sp_{{\mathcal B}_{+}}(T_1,\ldots ,T_n).$\par
\end{pro}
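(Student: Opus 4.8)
The plan is to reduce the projection property for the split semi-Browder spectra to the projection properties already known (or easily derived) for the essential split defect spectrum $sp_{\delta e}$ and split approximate point spectrum $sp_{\pi e}$, together with the behavior of the remainder sets $\tilde{\mathcal A}(T)$ and $\tilde{\mathcal D}(T)$ under projection. Indeed, from the preliminaries we have the decompositions $sp_{{\mathcal B}_{-}}(T)=sp_{\delta e}(T)\cup\tilde{{\mathcal A}}(T)$ and $sp_{{\mathcal B}_{+}}(T)=sp_{\pi e}(T)\cup\tilde{{\mathcal D}}(T)$, so it suffices to understand how each piece projects. Since the argument for (ii) is entirely parallel to that for (i) (interchanging $\hat T$ with $\tilde T$, codimensions of $M_k$ with dimensions of $N_k$, and $sp_{\delta e}$ with $sp_{\pi e}$), I would carry out (i) in detail and then remark that (ii) follows mutatis mutandis.

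First I would establish the two easy inclusions for (i). The inclusion $\pi(sp_{{\mathcal B}_{-}}(T_1,\ldots,T_{n+1}))\supseteq sp_{{\mathcal B}_{-}}(T_1,\ldots,T_n)$ should follow from the corresponding inclusion for $sp_{\delta e}$ (the projection property of the essential split defect spectrum is available from [4], as that spectrum is one of the semi-Fredholm-type spectra satisfying the projection property recalled at the end of section 2), combined with the observation that if $\lambda'\in\tilde{\mathcal A}(T_1,\ldots,T_n)$ then $\lambda'$ is already in the projection of $sp_{{\mathcal B}_{-}}$ of the longer tuple: one chooses $\lambda_{n+1}$ in $\sigma_{\delta}(T_{n+1}$ restricted appropriately$)$ — more carefully, one uses that the projection property for $\sigma_{{\mathcal B}_{-}}$ from [7] already gives $\pi(\sigma_{{\mathcal B}_{-}}(T_1,\ldots,T_{n+1}))=\sigma_{{\mathcal B}_{-}}(T_1,\ldots,T_n)\supseteq {\mathcal A}(T_1,\ldots,T_n)\supseteq \tilde{\mathcal A}(T_1,\ldots,T_n)$, and then lifts. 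For the reverse inclusion $\pi(sp_{{\mathcal B}_{-}}(T_1,\ldots,T_{n+1}))\subseteq sp_{{\mathcal B}_{-}}(T_1,\ldots,T_n)$: take $\lambda=(\lambda',\lambda_{n+1})$ with $\lambda\in sp_{{\mathcal B}_{-}}(T_1,\ldots,T_{n+1})$. If $\lambda\in sp_{\delta e}(T_1,\ldots,T_{n+1})$, then by the projection property of $sp_{\delta e}$ we get $\lambda'\in sp_{\delta e}(T_1,\ldots,T_n)\subseteq sp_{{\mathcal B}_{-}}(T_1,\ldots,T_n)$. Otherwise $\lambda\in\tilde{\mathcal A}(T_1,\ldots,T_{n+1})\subseteq {\mathcal A}(T_1,\ldots,T_{n+1})\subseteq\sigma_{{\mathcal B}_{-}}(T_1,\ldots,T_{n+1})$, so the projection property for $\sigma_{{\mathcal B}_{-}}$ from [7] gives $\lambda'\in\sigma_{{\mathcal B}_{-}}(T_1,\ldots,T_n)\subseteq sp_{{\mathcal B}_{-}}(T_1,\ldots,T_n)$.

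The subtle point — and the step I expect to be the main obstacle — is the lifting direction when $\lambda'\in\tilde{\mathcal A}(T_1,\ldots,T_n)$, i.e. showing that some $\lambda_{n+1}$ produces a point of $\tilde{\mathcal A}$ or at least of $sp_{{\mathcal B}_{-}}$ for the $(n+1)$-tuple that projects to $\lambda'$. The issue is that $\tilde{\mathcal A}$ carries the side condition $\lambda\notin sp_{\delta e}$, which is an open condition, so one must make sure the lift is not absorbed into $sp_{\delta e}$ of the longer tuple; here I would invoke the projection property of $sp_{\delta e}$ in the form $\pi^{-1}(\complement sp_{\delta e}(T_1,\ldots,T_n))$ meeting the relevant fibre, i.e. that for $\lambda'\notin sp_{\delta e}(T_1,\ldots,T_n)$ there is $\lambda_{n+1}$ with $(\lambda',\lambda_{n+1})\notin sp_{\delta e}(T_1,\ldots,T_{n+1})$ — which again is exactly the content of the projection property for $sp_{\delta e}$ read contrapositively. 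Granting that, and using that the codimensions $\operatorname{codim}M_k(T-\lambda)$ for the restricted operators on the longer tuple dominate those for the shorter one (since adding a range only makes $M_k$ larger, codimension can only drop, but one needs it to stay finite and still tend to infinity along a suitable fibre — this is where a genuine computation with the quotient modules $X/M_k$ may be needed, paralleling the proof of the projection property in [7]). I would handle this by choosing $\lambda_{n+1}$ outside the spectrum of the operator induced by $T_{n+1}$ on $R^\infty$ of the shorter tuple so that $M_k$ is unchanged by the extra generator, making $\operatorname{codim}M_k$ for the $(n+1)$-tuple equal to that for the $n$-tuple; then the defining conditions of $\tilde{\mathcal A}$ transfer verbatim. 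Once (i) is complete, I would close with a single sentence indicating that (ii) is proved by the dual argument, replacing codimensions of the $M_k$ by dimensions of the $N_k$, $\hat T$ by $\tilde T$, and $sp_{\delta e}$, $\sigma_{{\mathcal B}_{-}}$, $\tilde{\mathcal A}$ by $sp_{\pi e}$, $\sigma_{{\mathcal B}_{+}}$, $\tilde{\mathcal D}$ respectively.
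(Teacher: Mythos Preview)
Your argument is correct and follows essentially the same strategy as the paper: reduce both inclusions to the already-known projection properties of $sp_{\delta e}$ (from [4]) and of $\sigma_{{\mathcal B}_{-}}$ (from [7]). The only difference is cosmetic: you work with the decomposition $sp_{{\mathcal B}_{-}}=sp_{\delta e}\cup\tilde{\mathcal A}$, while the paper uses the other decomposition $sp_{{\mathcal B}_{-}}=\sigma_{{\mathcal B}_{-}}\cup{\mathcal C}_{-}$, and then the trivial inclusion ${\mathcal C}_{-}\subseteq sp_{\delta e}$. Both routes invoke exactly the same two external projection properties.

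Your final paragraph, however, manufactures a difficulty that is not there. The ``subtle point'' you worry about --- lifting $\lambda'\in\tilde{\mathcal A}(T_1,\ldots,T_n)$ to the $(n{+}1)$-tuple --- is already completely handled by your own earlier sentence: since $\tilde{\mathcal A}(T_1,\ldots,T_n)\subseteq{\mathcal A}(T_1,\ldots,T_n)\subseteq\sigma_{{\mathcal B}_{-}}(T_1,\ldots,T_n)=\pi(\sigma_{{\mathcal B}_{-}}(T_1,\ldots,T_{n+1}))$, any such $\lambda'$ lifts to a point of $\sigma_{{\mathcal B}_{-}}(T_1,\ldots,T_{n+1})\subseteq sp_{{\mathcal B}_{-}}(T_1,\ldots,T_{n+1})$, and you are done. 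There is no need to arrange that the lift lands in $\tilde{\mathcal A}$ of the longer tuple, no need to stay outside $sp_{\delta e}$ there, and no need for the spectral choice of $\lambda_{n+1}$ relative to $R^\infty$ or any comparison of $\operatorname{codim}M_k$ between the two tuples. The paper's choice of decomposition makes this transparency immediate; with your decomposition one simply has to remember the inclusion $\tilde{\mathcal A}\subseteq\sigma_{{\mathcal B}_{-}}$, which you in fact used. Drop that last paragraph and the proof is clean.
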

    
\begin{proof}

\indent By [7, Corollary 7] we now that $\pi (\sigma_{{\mathcal B}_{-}}(T_1,\ldots ,
T_n, T_{n+1}))=\sigma_{{\mathcal B}_{-}}(T_1,\ldots ,T_n)$
$\subseteq sp_{{\mathcal B}_{-}}(T_1,\ldots ,T_n)$. Moreover, since 
${\mathcal C}_{-}(T_1,\ldots ,T_n,T_{n+1})\subseteq 
sp_{\delta e}(T_1,\ldots ,T_n,T_{n+1})$, by [4, Corollary 2.6] we have that
\begin{align*}
\pi ({\mathcal C}_{-}(T_1,\ldots ,T_n,T_{n+1}))&\subseteq 
\pi (sp_{\delta e}(T_1,\ldots ,T_n,T_{n+1}))=
sp_{\delta e}(T_1,\ldots ,T_n)\\
&\subseteq sp_{{\mathcal B}_{-}}(T_1,\ldots ,T_n).
\end{align*}
\indent Thus, we have that
$$
\pi (sp_{{\mathcal B}_{-}}(T_1,\ldots ,T_n,T_{n+1}))\subseteq
sp_{{\mathcal B}_{-}}(T_1,\ldots ,T_n).
$$

\indent On the other hand, by [7, Corollary 7] we also have that
$$
\sigma_{{\mathcal B}_{-}}(T_1,\ldots ,T_n)=\pi (\sigma_{{\mathcal B}_{-}}
(T_1,\ldots ,T_n,T_{n+1}))\subseteq\pi (sp_{{\mathcal B}_{-}}
(T_1,\ldots ,T_n,T_{n+1})).
$$

Furthermore, since ${\mathcal C}_{-}
(T_1,\ldots ,T_n)\subseteq 
sp_{\delta e}(T_1,\ldots ,T_n)$,
by [4, Corollary 2.6] we also have that
${\mathcal C}_{-}(T_1,\ldots ,T_n)\subseteq sp_{\delta e}(T_1,\ldots ,T_n)=
\pi (sp_{\delta e}(T_1,\ldots ,T_n, T_{n+1}))\subseteq 
\pi(sp_{{\mathcal B}_{-}}(T_1,\ldots ,T_n, T_{n+1}))$.
Thus, 
$$
sp_{{\mathcal B}_{-}}(T_1,\ldots ,T_n)\subseteq
\pi (sp_{{\mathcal B}_{-}}(T_1,\ldots ,T_n,T_{n+1})),
$$
i.e., we have proved the first statement of the proposition.\par
 
\indent By means of a similar argument it is possible to see the second statement.
\end{proof}
    
\indent In the following proposition we shall see that the split 
semi-Browder joint spectra satisfy the analytic spectral mapping theorem.\par

\begin{pro}Let $X$ be a complex Banach space and 
$T=(T_1,\ldots ,T_n)$ a commuting tuple
of bounded linear operators defined on $X$. Then, if $f\in 
{\mathcal O}(sp(T))^m$, we have that \par
\item {\rm (i)} $ f(sp_{{\mathcal B}_{-}}(T_1,\ldots ,T_n))=sp_{{\mathcal B}_{-}}(f(T_1,\ldots ,T_n)),
$\par
\item {\rm (ii)} $f(sp_{{\mathcal B}_{+}}(T_1,\ldots ,T_n))=sp_{{\mathcal B}_{+}}(f(T_1,\ldots ,T_n)),$\par
\noindent where $sp(T)$ denotes the split spectrum of $T$.
\end{pro}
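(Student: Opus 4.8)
The plan is to reduce the spectral mapping theorem for the split semi-Browder joint spectra to spectral mapping theorems already available for their constituent pieces, exploiting the decompositions
$sp_{{\mathcal B}_{-}}(T)= sp_{\delta e}(T)\cup \tilde{{\mathcal A}}(T)$ and
$sp_{{\mathcal B}_{+}}(T)= sp_{\pi e}(T)\cup \tilde{{\mathcal D}}(T)$
established in section 3, together with the analogous decomposition $sp_{{\mathcal B}_{-}}(f(T))= sp_{\delta e}(f(T))\cup \tilde{{\mathcal A}}(f(T))$ applied to the tuple $f(T)$. First I would recall that the essential split defect spectrum $sp_{\delta e}$ and the essential split approximate point spectrum $sp_{\pi e}$ satisfy the analytic spectral mapping theorem for $f\in{\mathcal O}(sp(T))^m$; this is part of the material from [4] quoted in section 2, and since $sp(f(T))=f(sp(T))$ and $sp_{\delta}$, $sp_{\pi}$ also obey spectral mapping, the whole package is available. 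It then suffices to track how the "pathological growth" set $\tilde{{\mathcal A}}$ (respectively $\tilde{{\mathcal D}}$) transforms under $f$.

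The key steps, in order, for statement (i): (1) Observe $f(sp_{{\mathcal B}_{-}}(T))= f(sp_{\delta e}(T))\cup f(\tilde{{\mathcal A}}(T)) = sp_{\delta e}(f(T))\cup f(\tilde{{\mathcal A}}(T))$ using spectral mapping for $sp_{\delta e}$. (2) Show $f(\tilde{{\mathcal A}}(T))\subseteq sp_{{\mathcal B}_{-}}(f(T))$: given $\lambda\in\tilde{{\mathcal A}}(T)$ we have $\lambda\in{\mathcal A}(T)\subseteq\sigma_{{\mathcal B}_{-}}(T)$, hence by the spectral mapping theorem for the lower semi-Browder spectrum [7] $f(\lambda)\in\sigma_{{\mathcal B}_{-}}(f(T))\subseteq sp_{{\mathcal B}_{-}}(f(T))$. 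Combined with (1) this gives $f(sp_{{\mathcal B}_{-}}(T))\subseteq sp_{{\mathcal B}_{-}}(f(T))$. (3) For the reverse inclusion, take $\mu\in sp_{{\mathcal B}_{-}}(f(T))= sp_{\delta e}(f(T))\cup\tilde{{\mathcal A}}(f(T))$. If $\mu\in sp_{\delta e}(f(T))=f(sp_{\delta e}(T))$, then $\mu=f(\lambda)$ with $\lambda\in sp_{\delta e}(T)\subseteq sp_{{\mathcal B}_{-}}(T)$, done. If $\mu\in\tilde{{\mathcal A}}(f(T))\subseteq{\mathcal A}(f(T))\subseteq\sigma_{{\mathcal B}_{-}}(f(T))=f(\sigma_{{\mathcal B}_{-}}(T))$, write $\mu=f(\lambda)$ with $\lambda\in\sigma_{{\mathcal B}_{-}}(T)=\sigma_{\Phi_{-}}(T)\cup{\mathcal A}(T)$; if $\lambda\in\sigma_{\Phi_{-}}(T)$ then $f(\lambda)\in\sigma_{\Phi_{-}}(f(T))\subseteq sp_{\delta e}(f(T))$, contradicting $\mu\in\tilde{{\mathcal A}}(f(T))$ (which is disjoint from $sp_{\delta e}(f(T))$), so $\lambda\in{\mathcal A}(T)\subseteq sp_{{\mathcal B}_{-}}(T)$ and $\mu=f(\lambda)\in f(sp_{{\mathcal B}_{-}}(T))$. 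This closes (i); statement (ii) follows by the mirror-image argument with $sp_{\pi e}$, ${\mathcal D}$, $\tilde{{\mathcal D}}$ and $\sigma_{{\mathcal B}_{+}}$ in place of their lower counterparts.

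I expect the main obstacle to be step (3) — specifically, guaranteeing that when $\mu\in\tilde{{\mathcal A}}(f(T))$ one can always select a preimage $\lambda$ lying in $\sigma_{{\mathcal B}_{-}}(T)$ rather than merely in $sp(T)$, and that the case analysis on $\sigma_{\Phi_{-}}(T)$ versus ${\mathcal A}(T)$ is exhaustive and compatible with the disjointness of $\tilde{{\mathcal A}}(f(T))$ from $sp_{\delta e}(f(T))$. This hinges on the spectral mapping theorem for $\sigma_{{\mathcal B}_{-}}$ (and for $\sigma_{\Phi_{-}}$, $sp_{\delta e}$) behaving well with respect to preimages, i.e. on $f^{-1}$-type statements of the form $\mu\in\sigma_{\bullet}(f(T))\Rightarrow$ there is $\lambda\in\sigma_{\bullet}(T)$ with $f(\lambda)=\mu$, which are exactly the content of the cited spectral mapping results once read as equalities of sets. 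A secondary technical point, needing care but not deep, is that the projection property (Proposition above) and the standard reduction of the several-variable analytic functional calculus to the one-variable/polynomial case via the graph trick are what ultimately justify these spectral mapping statements in the joint setting; I would invoke them rather than reprove them, citing [4], [7] and [9].
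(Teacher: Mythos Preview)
Your argument is correct, though you work harder than necessary in step~(3): once you have written $\mu=f(\lambda)$ with $\lambda\in\sigma_{{\mathcal B}_{-}}(T)$, you are already done, since $\sigma_{{\mathcal B}_{-}}(T)\subseteq sp_{{\mathcal B}_{-}}(T)$; the further dichotomy $\sigma_{\Phi_{-}}(T)$ versus ${\mathcal A}(T)$ is harmless but superfluous.

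Your route, however, is genuinely different from the paper's. The paper does not use the decomposition $sp_{{\mathcal B}_{-}}=sp_{\delta e}\cup\tilde{\mathcal A}$ at all for this proposition. Instead it invokes the abstract ``spectral system'' machinery of~[4]: one forms the closed subalgebra $A=\overline{\Phi({\mathcal O}(sp(T)))}\subseteq L(X)$, checks that $sp(\cdot)$ is a spectral system on $A$ (compact, has the projection property, sits inside $\sigma^A_{joint}$) satisfying the analytic spectral mapping theorem, and then observes that $sp_{{\mathcal B}_{\pm}}$ are subspectra of $sp$ that are themselves spectral systems on $A$ by virtue of Propositions~3.1 and~3.2 (compactness and projection property). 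The general result [4, Theorem~1.2 and Corollary~1.3] then yields spectral mapping for $sp_{{\mathcal B}_{\pm}}$ automatically, with no case analysis. What your approach buys is concreteness and independence from the axiomatic framework of~[4]: it relies only on the already-established spectral mapping theorems for $sp_{\delta e}$ and for $\sigma_{{\mathcal B}_{-}}$ (from~[4] and~[7] respectively) and the set-theoretic decompositions of section~3. What the paper's approach buys is brevity and uniformity: it shows that the split semi-Browder spectra inherit spectral mapping purely from their formal properties (compactness and projection), and the same template applies to any subspectrum of $sp$ with those two properties.
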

\begin{proof}

\indent By [4, Corollary 2.6], the split sectrum of $T$, $sp(T)$, satisfies the 
analytic spectral mapping theorem, i.e., there is an algebra morphism
$$
\Phi\colon {\mathcal O}(sp(T))\to L(X),\hskip1cm f\to f(T),
$$
such that $1(T)=I$, $z_i(T)=T_i$, $1\le i\le n$,
where $z_i$ denotes the projection of $\mathbb{C}^n$ onto the
$i$-th coordinate,
and such that the equality $sp(f(T))=f(sp(T))$, holds for all 
$f\in {\mathcal O}(sp(T))^m$.\par

\indent Now, as in [4], let us consider the algebra 
$$
A=\overline{\Phi ({\mathcal O}(sp(T)))}\subseteq L(X).
$$ 
Then, we have that the split spectrum is a spectral system on $A$, 
in the sense of [4, section 1].\par

\indent In order to show this claim, since the split spectrum is a compact set which
also satisfies the projection property ([4, Corollary 2.6]), we have only to see that if 
$a=(a_1,\ldots ,a_n)$ is a tuple of commuting operators such that 
$a_i\in A$, then $sp(a)\subseteq\sigma^A_{joint (a)}$
(the usual joint spectrum of $a$ with respect to the algebra $A$, see  [4, section 1]). \par

\indent In fact, if $\lambda=(\lambda_1,\ldots ,\lambda_n) \in sp(a)
\setminus \sigma^A_{joint (a)}$, then there are $B_1,\ldots ,B_n\in A$
such that $\sum_{i=1}^n B_i(a_i-\lambda_iI)=I$, where $I$ denotes
the identity map of $X$. In particular, 
$$
\sum_{i=1}^n L_{B_i}(L_{a_i}-\lambda_i I_{L(X)})=I_{L(X)}
$$.
Then, $\lambda\notin\sigma (L_a)$, the Taylor joint spectrum of the
tuple of left multiplication, $L_a=(L_{a_1},\ldots ,L_{a_n})$, defined on 
$L(X)$. However, by [4, Corollary 2.5], $\lambda\notin sp(a)$, which is
impossible by our assumption.\par 

\indent Now, since $sp_{{\mathcal B}_{-}}(T_1,\ldots ,T_n)$
and $sp_{{\mathcal B}_{+}}(T_1,\ldots ,T_n)$ are contained in $sp (T)$,
by Propositions 3.1 and 3.2, $sp_{{\mathcal B}_{-}}(T_1,\ldots ,T_n)$
and $sp_{{\mathcal B}_{+}}(T_1,\ldots ,T_n)$ are spectral systems on $A$ 
contained in $sp(T)$. Then, by [4, Theorem 1.2] and [4, Corollary 1.3], since the split spectrum is
a spectral system on $A$ which satisfy the analytic spectral mapping theorem,
$sp_{{\mathcal B}_{-}}(T_1,\ldots ,T_n)$
and $sp_{{\mathcal B}_{+}}(T_1,\ldots ,T_n)$ also satisfy the analytic
spectral mapping theorem defined on ${\mathcal O}(sp(T))$.
\end{proof}

\indent In the following section we give a description of the semi-Fredholm
joint spectra of the system $(S\otimes I,I\otimes T)$, which will be a central
step for one of the main theorems of the present article.\par

\section{ The semi-Fredholm joint spectra}

\noindent In this section we consider two complex Banach spaces
$X_1$ and $X_2$, two tuples of bounded linear operators
defined on $X_1$ and $X_2$, $S=(S_1, \ldots ,S_n)$
and $T=(T_1,\ldots ,T_n)$ respectively, and we describe the semi-Fredholm
joint spectra of the $(n+m)$-tuple $(S\otimes I,I\otimes T)$
defined on $X_1\tilde{\otimes }X_2$, a tensor product between
$X_1$ and $X_2$ relative to $<X_1,X_1{'}>$ and $<X_2,X_2{'}>$,
where $(S\otimes I,I\otimes T)=
(S_1\otimes I,\ldots ,S_n\otimes I,I\otimes T_1,\ldots ,I\otimes T_m)$.
\par

\indent We recall that if $K_1$, $K_2$ and $K$ are the Koszul complexes
associated to the tuples $S$, $T$ and $(S\otimes I,I\otimes T)$ 
respectively (see [9]), i.e., 
$K_1=(X_1\otimes \wedge \mathbb{C}^n,d_1)$,  
$K_2=(X_2\otimes \wedge \mathbb{C}^m,d_2)$ and 
$K=(X_1\tilde{\otimes}X_2\otimes \wedge \mathbb{C}^{n+m},d_{12})$,
then, by [5, section 3] we have that $K$ is isomorphic to the total 
complex of the double complex obtained from the tensor
product of the complexes $K_1$ and $K_2$; we denote
this total complex by $K_1\tilde{\otimes}K_2$. Moreover,
if we consider the differential spaces associated to $K_1$, $K_2$,
$K$, and $K_1\tilde{\otimes}K_2$, which we denote, by
${\mathcal K}_1$, ${\mathcal K}_2$, ${\mathcal K}$, and ${\mathcal K}_1
\tilde{\otimes} {\mathcal K}_2$ respectively, then we have that
${\mathcal K}\cong {\mathcal K_1}\tilde{\otimes}{\mathcal K_2}$, 
and if the boundary of these differential
spaces are, $\partial_1$, $\partial_2$, $\partial_{12}$, and
$\partial$ respectively, then we have that $\partial = \partial_1\otimes I+\eta
\otimes \partial_2$, where $\eta$ is the map, $\eta\colon {\mathcal K}_2\to {\mathcal K}_2$,
$\eta\mid X_2\otimes\wedge^m \mathbb{C}=(-1)^mI$ (for a 
complete exposition see [5, section 3]).\par     

\indent In the following proposition we describe  the defect,
the approximate point spectrum, and the split version of these
spectra for the tuple $(S\otimes I,I\otimes T)$. This result is 
necessary for our description of the semi-Fredholm joint spectra.\par

\begin{pro} Let $X_1$ and $X_2$ be two complex Banach
spaces, and $X_1\tilde{\otimes}X_2$ a tensor product of $X_1$
and $X_2$ relative to $<X_1,X_1{'}>$ and $<X_2,X_2{'}>$. Let us
consider two tuples of commuting operators defined on $X_1$ and 
$X_2$, $S$ and $T$ respectively. Then, for the tuple $(S\otimes I,
I\otimes T)$, defined on $X_1\tilde{\otimes}X_2$, we have that \par
{\rm (i)} $\sigma_{\delta}(S)\times \sigma_{\delta }(T)\subseteq
\sigma_{\delta}(S\otimes I,I\otimes T)\subseteq sp_{\delta}
(S\otimes I,I\otimes T)
 \subseteq sp_{\delta}(S)
\times sp_{\delta}(T)$,\par
{\rm (ii)}$\sigma_{\pi}(S)\times \sigma_{\pi }(T)\subseteq
\sigma_{\pi}(S\otimes I,I\otimes T) \subseteq sp_{\pi }
(S\otimes I,I\otimes T)\subseteq sp_{\pi}(S)\times sp_{\pi}(T)$.\par
\noindent In addition, if $X_1$ and $X_2$ are Hilbert spaces, the above inclusions
are equalities.
\end{pro}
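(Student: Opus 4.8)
The plan is to express each of the four joint spectra that occur in (i) and (ii) through the first or the last map of the appropriate Koszul complex and to exploit only the axioms (T1)--(T4). For a commuting tuple $U=(U_{1},\dots ,U_{p})$ on a Banach space $X$, $\lambda\notin\sigma_{\delta}(U)$ means that $\widehat{U-\lambda}\colon X^{p}\to X$ is onto; since $sp_{\delta}(U)=\sigma_{\delta}(U)\cup{\mathcal C}_{-}(U)$, $\lambda\notin sp_{\delta}(U)$ means that $\widehat{U-\lambda}$ has a bounded right inverse; dually, $\lambda\notin\sigma_{\pi}(U)$ means that $\widetilde{U-\lambda}\colon X\to X^{p}$ is bounded below and $\lambda\notin sp_{\pi}(U)$ means that $\widetilde{U-\lambda}$ has a bounded left inverse. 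The two middle inclusions of (i) and (ii) then hold by definition. The two ``outer'' inclusions for the split spectra are easy consequences of (T3): if, say, $\lambda\notin sp_{\delta}(S)$, translating to $\lambda=0$ one gets $R_{1},\dots ,R_{n}\in L(X_{1})$ with $\sum_{i}S_{i}R_{i}=I$, and then $\sum_{i}(S_{i}\otimes I)(R_{i}\otimes I)=(\sum_{i}S_{i}R_{i})\otimes I=I$ on $X_{1}\tilde{\otimes}X_{2}$, so $(R_{1}\otimes I,\dots ,R_{n}\otimes I,0,\dots ,0)$ is a bounded right inverse of $\widehat{(S\otimes I,I\otimes T-\mu)}$ and $(0,\mu)\notin sp_{\delta}(S\otimes I,I\otimes T)$; a symmetric (dual) argument with left inverses gives $sp_{\pi}(S\otimes I,I\otimes T)\subseteq sp_{\pi}(S)\times sp_{\pi}(T)$.

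It remains to prove $\sigma_{\pi}(S)\times\sigma_{\pi}(T)\subseteq\sigma_{\pi}(S\otimes I,I\otimes T)$ and its analogue for $\sigma_{\delta}$. For the first I would argue directly: after translation, $\widetilde{S}$ and $\widetilde{T}$ are not bounded below, so there are unit vectors $x_{k}\in X_{1}$, $y_{k}\in X_{2}$ with $S_{i}x_{k}\to 0$ and $T_{j}y_{k}\to 0$ for all $i,j$; putting $z_{k}=x_{k}\otimes y_{k}$ one has $\|z_{k}\|=1$ by (T1) and $\|(S_{i}\otimes I)z_{k}\|=\|S_{i}x_{k}\|\to 0$, $\|(I\otimes T_{j})z_{k}\|=\|T_{j}y_{k}\|\to 0$ by (T2), so $\widetilde{(S\otimes I,I\otimes T)}$ is not bounded below, i.e. $(0,0)\in\sigma_{\pi}(S\otimes I,I\otimes T)$. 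The inclusion for $\sigma_{\delta}$ is the dual statement. By the closed range theorem, $\widehat{U}$ is onto iff its adjoint is bounded below, so $M_{1}(S)\ne X_{1}$ amounts to $\widetilde{S'}$ not being bounded below on $X_{1}'$ (and likewise for $T$, and for $(S\otimes I,I\otimes T)$ whose adjoint tuple is $((S\otimes I)',(I\otimes T)')$ on $(X_{1}\tilde{\otimes}X_{2})'$). Fixing a unit vector $x_{0}\in X_{1}$, axiom (T4) provides for each $\chi\in X_{1}'$ a bounded ``partial evaluation'' $P_{\chi}\colon X_{1}\tilde{\otimes}X_{2}\to X_{2}$ with $P_{\chi}(x\otimes y)=\chi(x)y$ --- namely $f_{x_{0},\chi}\otimes I$ followed by the inverse of the isometric embedding $y\mapsto x_{0}\otimes y$ --- hence for $\eta\in X_{2}'$ a functional $\Theta_{\chi,\eta}=\eta\circ P_{\chi}\in(X_{1}\tilde{\otimes}X_{2})'$ with $\Theta_{\chi,\eta}(x\otimes y)=\chi(x)\eta(y)$ and $\|\chi\|\,\|\eta\|\le\|\Theta_{\chi,\eta}\|\le C\,\|\chi\|\,\|\eta\|$ by (T1). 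Using (T3) one checks the operator identities $P_{\chi}(S_{i}\otimes I)=P_{S_{i}'\chi}$ and $P_{\chi}(I\otimes T_{j})=T_{j}P_{\chi}$, whence $(S_{i}\otimes I)'\Theta_{\chi,\eta}=\Theta_{S_{i}'\chi,\eta}$ and $(I\otimes T_{j})'\Theta_{\chi,\eta}=\Theta_{\chi,T_{j}'\eta}$. Now, feeding in unit sequences $\chi_{k}$ with $S_{i}'\chi_{k}\to 0$ and $\eta_{k}$ with $T_{j}'\eta_{k}\to 0$, the functionals $\Theta_{k}=\Theta_{\chi_{k},\eta_{k}}$ satisfy $\|\Theta_{k}\|\ge 1$ while $(S_{i}\otimes I)'\Theta_{k}\to 0$ and $(I\otimes T_{j})'\Theta_{k}\to 0$; hence the adjoint tuple is not bounded below, $\widehat{(S\otimes I,I\otimes T)}$ is not onto, and $(0,0)\in\sigma_{\delta}(S\otimes I,I\otimes T)$. (Alternatively, the duality theory of [5] identifies $((S\otimes I)',(I\otimes T)')$ with $(S'\otimes I,I\otimes T')$ on $X_{1}'\tilde{\otimes}X_{2}'$, reducing this inclusion to the $\sigma_{\pi}$-inclusion just established.)

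The step I expect to be the main obstacle is precisely this defect inclusion in the case where $M_{1}(S)$ or $M_{1}(T)$ is dense but proper: then no functional separates a cokernel vector from it, so no genuine joint eigenvector of the adjoint tuple is available and one must instead manufacture the approximate joint null sequences above in the duals; this is also the reason only inclusions, not equalities, can be expected over general Banach spaces. Finally, when $X_{1}$ and $X_{2}$ are Hilbert spaces every closed subspace is complemented, so ${\mathcal C}_{\pm}(S)={\mathcal C}_{\pm}(T)=\emptyset$ and therefore $sp_{\delta}(S)=\sigma_{\delta}(S)$, $sp_{\delta}(T)=\sigma_{\delta}(T)$, $sp_{\pi}(S)=\sigma_{\pi}(S)$, $sp_{\pi}(T)=\sigma_{\pi}(T)$; the chain in (i) then reads $\sigma_{\delta}(S)\times\sigma_{\delta}(T)\subseteq\sigma_{\delta}(S\otimes I,I\otimes T)\subseteq sp_{\delta}(S\otimes I,I\otimes T)\subseteq\sigma_{\delta}(S)\times\sigma_{\delta}(T)$ and similarly for (ii), forcing every inclusion to be an equality.
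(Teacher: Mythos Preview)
Your argument is correct. The middle inclusions and the right-hand (split) inclusions are essentially what the paper does as well, only phrased in the Koszul-complex language: your right inverse $(R_{1}\otimes I,\dots ,R_{n}\otimes I,0,\dots ,0)$ is exactly the paper's homotopy $H=h\otimes I$ read off at level $p=1$.

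The genuine difference is in the left-hand inclusions. The paper does not build approximate null vectors or functionals; it invokes Eschmeier's K\"unneth-type inequality ([5, Theorem~2.2]) for the tensor product of the Koszul differential spaces, which gives $\dim H_{0}(K)\ge\dim H_{0}(K_{1})\cdot\dim H_{0}(K_{2})$ (and the analogous top-degree statement), whence both $\sigma_{\delta}$- and $\sigma_{\pi}$-inclusions fall out at once. Your route is more elementary and self-contained: for $\sigma_{\pi}$ you simply tensor approximate null sequences using (T1)--(T2), and for $\sigma_{\delta}$ you dualise via the partial evaluation $P_{\chi}$ extracted from (T4) to manufacture an approximate null sequence for the adjoint tuple. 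This avoids the black-box citation entirely. Note that your operator identities $P_{\chi}(S_{i}\otimes I)=P_{S_{i}'\chi}$ and $P_{\chi}(I\otimes T_{j})=T_{j}P_{\chi}$ really do hold on all of $X_{1}\tilde{\otimes}X_{2}$ (not just on elementary tensors) because they are equalities between composites of the structural maps furnished by (T3)--(T4), so no density assumption is needed. What the paper's approach buys is uniformity: the same Koszul/K\"unneth machinery is reused verbatim in the semi-Fredholm and semi-Browder propositions that follow, where one must control $\dim H_{0}(K^{r})$ for varying $r$; your direct argument, while cleaner here, would not extend as readily to those later statements. Your Hilbert-space coda is fine: it suffices that $sp_{\delta}=\sigma_{\delta}$ and $sp_{\pi}=\sigma_{\pi}$ on each factor, regardless of whether $X_{1}\tilde{\otimes}X_{2}$ itself is Hilbert.
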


\begin{proof}

\indent Let us consider $\lambda\in\mathbb{C}^n$, $\mu\in\mathbb{C}^m$
and the Koszul complexes associated to $S-\lambda $, $T-\mu $
and $(S\otimes I,I\otimes T) -(\lambda ,\mu)=((S-\lambda)\otimes I ,
I\otimes (T-\mu))$, which we
denote by $K_1$, $K_2$ and $K$.
By the previous  observation we have that $K\cong 
K_1\tilde{\otimes}K_2$. Moreover, if we consider the
differential spaces associated to these complexes, ${\mathcal K}_1$
${\mathcal K}_2$, and ${\mathcal K}$, then we have that ${\mathcal K}\cong
{\mathcal K}_1\tilde{\otimes}{\mathcal K}_2$. \par

\indent Now, we may apply [5, Theorem 2.2] to the differential spaces
${\mathcal K}_1$, ${\mathcal K}_2$, and 
${\mathcal K}_1\tilde{\otimes}{\mathcal K}_2$. However, by the definition
of the map $\varphi$ in [5, Theorem 2.2], the grading of the differential spaces ${\mathcal K}_1$, ${\mathcal K}_2$, and 
${\mathcal K}_1\tilde{\otimes}{\mathcal K}_2$, and of the isomorphism 
${\mathcal K}\cong {\mathcal K}_1\tilde{\otimes}{\mathcal K}_2$, we have
the left hand side inclusion of the first statement.\par

\indent The middle inclusion is clear.\par

\indent Let us now suppose that $(\lambda ,\mu)\notin 
sp_{\delta}(S)\times sp_{\delta}(T)$. Then, either $\lambda\notin
sp_{\delta}(S)$ or $\mu\notin sp_{\delta }(T)$. We shall see that
if $\lambda\notin sp_{\delta}(S)$, then $(\lambda ,\mu)\notin 
sp_{\delta}(S\otimes I, I\otimes T)$. By means of a similar
argumet it is possible to see that if $\mu\notin sp_{\delta}(T)$ 
then $(\lambda ,\mu)\notin sp_{\delta}(S\otimes I,I\otimes T)$.\par

\indent Now, if $\lambda\notin sp_{\delta}(S)$, there is a bounded
linear operator $h\colon X_1\to X_1\otimes\wedge^n\mathbb{C}$ such that
$$
d_{11}\circ h=I,
$$
where $d_{11}\colon X_1\otimes \wedge \mathbb{C}^n\to X_1$ is the chain map of
the Koszul complex $K_1$ at level $p=1$.\par

\indent Let us consider the map 
$$
H\colon X_1\tilde{\otimes} X_2\to X_1\otimes\wedge\mathbb{C}^n
\tilde{\otimes}X_2 , \hskip.5cm H=h\otimes I.
$$
Then, by the properties of the tensor product introduced in [5],
$H$ is a well defined map which satisfies
$$
d_1\circ H=d_{11}\circ h\otimes I=
I\otimes I=I,
$$
where $ d_1$ is the chain map of the complex 
$K_1\tilde{\otimes} K_2$ at level $p=1$. Since $K
\cong K_1\tilde{\otimes} K_2$, we have that
$(\lambda ,\mu)\notin sp_{\delta }(S\otimes I,I\otimes T) $.\par

\indent The second statement may be proved by means of a similar argument.
\end{proof}

\indent In the following proposition we state our description of the
semi-Fredholm joint spectra.\par

\begin{pro} Let $X_1$ and $X_2$ be two complex Banach
spaces, and $X_1\tilde{\otimes}X_2$ a tensor product of $X_1$
and $X_2$ relative to $<X_1,X_1{'}>$ and $<X_2,X_2{'}>$. Let us
consider two tuples of commuting operators defined on $X_1$ and 
$X_2$, $S$ and $T$ respectively. Then, for the tuple $(S\otimes I,
I\otimes T)$, defined on $X_1\tilde{\otimes}X_2$, we have that

\begin{align*}
{\rm (i)}\hbox{  } &\sigma_{\Phi_{-}}(S)\times \sigma_{\delta }(T)\cup 
\sigma_{\delta}(S)\times\sigma_{\Phi_{-}}(T)\subseteq
\sigma_{\Phi_{-}}(S\otimes I,I\otimes T)\subseteq\\
& sp_{\delta e}(S\otimes I,I\otimes T)
\subseteq sp_{\delta e }(S)\times sp_{\delta}(T)\cup
sp_{\delta  }(S)\times sp_{\delta e}(T),\\\end{align*}

\begin{align*}
{\rm (ii)}\hbox{  }&\sigma_{\Phi_{+}}(S)\times \sigma_{\pi }(T)\cup 
\sigma_{\pi}(S)\times\sigma_{\Phi_{+}}(T)\subseteq
\sigma_{\Phi_{+}}(S\otimes I,I\otimes T)\subseteq\\
& sp_{\pi e}(S\otimes I,I\otimes T)
\subseteq sp_{\pi e }(S)\times sp_{\pi}(T)\cup
sp_{\pi  }(S)\times sp_{\pi e}(T).\\\end{align*}

\noindent In addition, if $X_1$ and $X_2$ are Hilbert spaces,
the above inclusions are equalities.
\end{pro}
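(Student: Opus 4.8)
The plan is to reduce everything to the semi-Fredholm statement by combining Proposition 4.1 with the decomposition $\sigma_{{\mathcal B}_{-}}(\cdot)=\sigma_{\Phi_{-}}(\cdot)\cup{\mathcal A}(\cdot)$ and $sp_{\delta e}(\cdot)=\sigma_{\Phi_{-}}(\cdot)\cup{\mathcal C}_{-}(\cdot)$ (and their upper analogues), exactly as Proposition 4.1 was used to pin down $\sigma_\delta$ and $sp_\delta$. I would prove part (i) and then remark that (ii) follows by the same argument applied to the map $\tilde{T}$ instead of $\hat{T}$ and to the homology of the Koszul complex $K$ in the dual codimension. So I focus on (i).

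For the \emph{middle inclusion} $\sigma_{\Phi_{-}}(S\otimes I,I\otimes T)\subseteq sp_{\delta e}(S\otimes I,I\otimes T)$ there is nothing to prove, since in general $\sigma_{\Phi_{-}}(\cdot)\subseteq sp_{\delta e}(\cdot)$ by the inclusion recorded in section 3. For the \emph{left-hand inclusion}, I would argue by contraposition: suppose $(\lambda,\mu)\notin\sigma_{\Phi_{-}}(S\otimes I,I\otimes T)$, i.e. the $(n+m)$-th homology (the cokernel of $d_{12}$ at level $p=1$) of the Koszul complex $K$ of $((S-\lambda)\otimes I,\,I\otimes(T-\mu))$ has finite codimension. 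Using $K\cong K_1\tilde\otimes K_2$ and the grading/Künneth-type analysis of [5, Theorem 2.2] exactly as in the proof of Proposition 4.1, the top homology of the total complex is computed from the top homologies of $K_1$ and $K_2$; finite-codimensionality of that tensor-type expression forces that for the pair $(\lambda,\mu)$ one cannot have simultaneously $\lambda\in\sigma_{\Phi_{-}}(S)$ together with $\mu\in\sigma_\delta(T)$, nor $\lambda\in\sigma_\delta(S)$ together with $\mu\in\sigma_{\Phi_{-}}(T)$ — because a tensor product of two cokernels is finite-dimensional only if one factor is finite-dimensional, and an infinite-codimensional cokernel tensored with a nonzero cokernel stays infinite-codimensional. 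This gives $(\lambda,\mu)\notin \sigma_{\Phi_{-}}(S)\times\sigma_\delta(T)\cup\sigma_\delta(S)\times\sigma_{\Phi_{-}}(T)$, which is the desired left-hand inclusion.

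For the \emph{right-hand inclusion} $sp_{\delta e}(S\otimes I,I\otimes T)\subseteq sp_{\delta e}(S)\times sp_\delta(T)\cup sp_\delta(S)\times sp_{\delta e}(T)$, I would proceed as in the last part of the proof of Proposition 4.1: assume $(\lambda,\mu)$ lies outside the right-hand side, so that either $\lambda\notin sp_{\delta e}(S)$ or $\lambda\notin sp_\delta(S)$ must fail to pair badly with $\mu$; concretely one checks the two cases $\lambda\notin sp_{\delta e}(S)$ (then also $\mu\notin sp_{\delta e}(T)$ is allowed to fail, so $\mu\notin sp_\delta(T)$ forces nothing) — more carefully, the complement of the right-hand side is the set of $(\lambda,\mu)$ with ($\lambda\notin sp_{\delta e}(S)$ or $\mu\notin sp_\delta(T)$) and ($\lambda\notin sp_\delta(S)$ or $\mu\notin sp_{\delta e}(T)$). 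In each of the resulting sub-cases one has, on one of the two factors, an operator $h$ splitting the top Koszul differential (the $sp_{\delta e}$-condition gives finite-codimensional split cokernel; $sp_\delta$ gives split cokernel), and then $H=h\otimes I$ together with the isomorphism $K\cong K_1\tilde\otimes K_2$ produces the required finite-codimensional direct complement for the top homology of $K$, exactly as the map $d_1\circ H=I$ argument in Proposition 4.1 but carried out in the relative, finite-codimensional setting using property (T4) and [5, Theorem 2.2]. Hence $(\lambda,\mu)\notin sp_{\delta e}(S\otimes I,I\otimes T)$.

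Finally, in the Hilbert space case, all of $\sigma_{\Phi_{-}}$, $\sigma_\delta$, $sp_{\delta e}$, $sp_\delta$ coincide with their split versions ($sp_{\delta e}=\sigma_{\Phi_{-}}$, $sp_\delta=\sigma_\delta$ since every closed subspace is complemented), and likewise for the upper quantities; then the chain of inclusions collapses: the left-hand side and the right-hand side of (i) become $\sigma_{\Phi_{-}}(S)\times\sigma_\delta(T)\cup\sigma_\delta(S)\times\sigma_{\Phi_{-}}(T)$, forcing equality throughout, and similarly for (ii). The main obstacle I anticipate is the right-hand inclusion: one must track carefully, through the grading of [5, Theorem 2.2] and the isomorphism $K\cong K_1\tilde\otimes K_2$, that a \emph{split} finite-codimensional range on one Koszul factor tensors up to a \emph{split} finite-codimensional range on the total complex — i.e. that both the finiteness of the codimension and the existence of a direct complement are simultaneously preserved by $-\tilde\otimes-$; the bookkeeping of which homological degree contributes, and verifying that the complement produced is genuinely closed and finite-dimensional, is where the real work lies.
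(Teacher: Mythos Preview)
Your treatment of the left-hand and middle inclusions is correct and matches the paper's approach (adapting the $\varphi$-argument of Proposition~4.1 via the injection $H_0(K_1)\otimes H_0(K_2)\hookrightarrow H_0(K)$). The gap is in the right-hand inclusion.

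Suppose $(\lambda,\mu)\in sp_{\delta e}(S\otimes I,I\otimes T)$ lies outside $E=sp_{\delta e}(S)\times sp_{\delta}(T)\cup sp_{\delta}(S)\times sp_{\delta e}(T)$. Since $sp_{\delta e}\subseteq sp_{\delta}$, Proposition~4.1 already gives $\lambda\in sp_{\delta}(S)$ and $\mu\in sp_{\delta}(T)$; the complement conditions then force $\lambda\notin sp_{\delta e}(S)$ \emph{and} $\mu\notin sp_{\delta e}(T)$ simultaneously. In this situation each factor only delivers a Fredholm split, i.e.\ maps $h,g$ with $d_{11}\circ h=I-k_1$ and $d_{21}\circ g=I-k_2$ for finite-rank projectors $k_1,k_2$. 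Your single-factor map $H=h\otimes I$ then yields $d_1\circ H=(I-k_1)\otimes I=I-k_1\otimes I$, and $k_1\otimes I$ has range $R(k_1)\tilde{\otimes}X_2$, which is infinite-dimensional whenever $X_2$ is; so this does \emph{not} exhibit a Fredholm split of $K$ at level~$0$. The one-factor trick from Proposition~4.1 works only when one coordinate actually has an \emph{exact} split ($\lambda\notin sp_{\delta}(S)$ or $\mu\notin sp_{\delta}(T)$), which is precisely the sub-case you no longer have.

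The paper's remedy is to combine both homotopies: one builds $H$ from $h\otimes I$ and $I\otimes g$ together so that, after a telescoping computation, $d_1\circ H=I-k_1\otimes k_2$. The decisive point, supplied by [5, Lemma~1.1], is that $k_1\otimes k_2$ is a finite-rank projector with range $R(k_1)\otimes R(k_2)$, hence compact; this gives $(\lambda,\mu)\notin sp_{\delta e}(S\otimes I,I\otimes T)$, the required contradiction. This two-sided homotopy is the missing ingredient in your argument. (Incidentally, the decompositions $\sigma_{{\mathcal B}_{-}}=\sigma_{\Phi_{-}}\cup{\mathcal A}$ and $sp_{\delta e}=\sigma_{\Phi_{-}}\cup{\mathcal C}_{-}$ you invoke in your opening paragraph play no role in Proposition~4.2; they enter only in the Browder argument of Section~5.)
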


\begin{proof}
\indent First of all, let us observe that we use the same notations of Proposition 4.1.\par

\indent  With regard to the first statement, in order to prove the 
left hand side inclusion, it is enough to adapt for this case
the argument that we have developed in Proposition 4.1 for the corresponding inclusion.\par

\indent The middle inclusion is clear.\par

\indent Let us denote by $E$ the set $E= sp_{\delta e }(S)\times 
sp_{\delta}(T)\cup sp_{\delta  }(S)\times sp_{\delta e}(T)$,
and let us consider $(\lambda ,\mu)\in sp_{\delta e}(S\otimes I,I\otimes T)
\setminus E$. Then, by Proposition 4.1, since $(\lambda ,\mu)\in sp_{\delta }
(S\otimes I,I\otimes T)\subseteq sp_{\delta }(S)\times 
sp_{\delta}(T)$, we have that $\lambda\in sp_{\delta}(S)\setminus
sp_{\delta e}(S)$ and $\mu\in sp_{\delta}(T)\setminus
sp_{\delta e}(T)$. In particular, there are two linear bounded maps
$h\colon X_1\to X_1\otimes\wedge^1\mathbb{C}^n$, $g\colon X_2\to 
X_2\otimes\wedge^1\mathbb{C}^m$, and two compact operators
$k_1\colon X_1\to X_1$ and $k_2\colon X_2\to X_2$ such that
$$
d_{11}\circ h=I-k_1,\hskip.5cm d_{21}\circ g=
I-k_2,
$$  
where $d_{21}$ is the boundary map of the complex $K_2$ at level $p=1$.
Moreover, by an argument similar to [4, Theorem 2.7] or [5, Proposition 2.1], the maps
$k_i$, $i=1$, $2$, may be chosen as finite rank projectors.\par

\indent In addition, by the properties of the tensor product introduced
in [5], we may
consider the well defined map 
$$
H\colon X_1\tilde{\otimes} X_2 \to (K_1\tilde{\otimes}
K_2)_1,\hskip.5cm H=(h\otimes I,I\otimes g).
$$
Now, an easy calculation shows that $d_1\circ H
=I-k_1\otimes k_2$, where $d_1$ denotes the chain map of
the complex $K_1\tilde{\otimes}K_2$ at level $p=1$. 
However, it is not difficult to see, using in 
particular [5, Lemma 1.1], that $k_1\otimes k_2$ is a finite rank projector
whose range coincide with $R(k_1)\otimes R(k_2)$. In particular,
$k_1\otimes k_2$ is a compact operator. 
Thus, since $K\cong K_1\tilde{\otimes} K_2$, $(\lambda ,\mu)\notin sp_{\delta e}(S\otimes I,I\otimes T )$,
which is impossible by our assumptions.\par

\indent By means of a similar argument it is possible to see the second
statement.
\end{proof}
 
\section{The semi-Browder joint spectra}

\noindent In this section we give our description of the semi-Browder
joint spectra of the tuple $(S\otimes I  ,I\otimes T)$. The following
theorem is one of the main results of the present article.\par

\begin{thm}Let $X_1$ and $X_2$ be two complex Banach
spaces, and $X_1\tilde{\otimes}X_2$ a tensor product of $X_1$
and $X_2$ relative to $<X_1,X_1{'}>$ and $<X_2,X_2{'}>$. Let us
consider two tuples of commuting operators defined on $X_1$ and 
$X_2$, $S$ and $T$ respectively. Then, for the tuple $(S\otimes I,
I\otimes T)$, defined on $X_1\tilde{\otimes}X_2$, we have that
\begin{align*}
{\rm (i)}\hbox{  }&\sigma_{{\mathcal B}_{-}}(S)\times \sigma_{\delta }(T)\cup 
\sigma_{\delta}(S)\times\sigma_{{\mathcal B}_{-}}(T)\subseteq
\sigma_{{\mathcal B}_{-}}(S\otimes I,I\otimes T)\subseteq\\
& sp_{{\mathcal B}_{-}}(S\otimes I,I\otimes T)
\subseteq sp_{{\mathcal B}_{-} }(S)\times sp_{\delta}(T)\cup
sp_{\delta  }(S)\times sp_{{\mathcal B}_{-}}(T),\\\end{align*}

\begin{align*}
{\rm (ii)}\hbox{  }&\sigma_{{\mathcal B}_{+}}(S)\times \sigma_{\pi }(T)\cup 
\sigma_{\pi}(S)\times\sigma_{{\mathcal B}_{+}}(T)\subseteq
\sigma_{{\mathcal B}_{+}}(S\otimes I,I\otimes T)\subseteq\\
& sp_{{\mathcal B}_{+}}(S\otimes I,I\otimes T)
\subseteq sp_{{\mathcal B}_{+} }(S)\times sp_{\pi}(T)\cup
sp_{\pi  }(S)\times sp_{{\mathcal B}_{+}}(T),\\\end{align*}

\noindent In addition, if $X_1$ and $X_2$ are Hilbert spaces,
the above inclusions are equalities.
\end{thm}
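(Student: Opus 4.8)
The plan is to follow closely the structure used in Propositions 4.1 and 4.2, transferring the semi-Fredholm inclusions to the semi-Browder setting by controlling the relevant dimension/codimension sequences. I would treat statement (i) in detail; statement (ii) is entirely dual, replacing $\hat{T}$ by $\tilde{T}$, codimensions of $M_k$ by dimensions of $N_k$, and $\delta$-spectra by $\pi$-spectra throughout. For the left-hand inclusion of (i), I would adapt verbatim the Koszul-complex argument from Proposition 4.1: if $\lambda\notin\sigma_{\delta}(S)$ then $\lambda$ lies outside the bigger set, so assume $\lambda\in\sigma_{\mathcal{B}_-}(S)$ and $\mu\in\sigma_{\delta}(T)$ (the other product set being symmetric); using $K\cong K_1\tilde{\otimes}K_2$ and the tensor-factor structure of the boundary map $\partial=\partial_1\otimes I+\eta\otimes\partial_2$, one checks that the failure of codim$\,R^\infty((S-\lambda)\otimes I,I\otimes(T-\mu))<\infty$ is forced by the failure on the $S$-side. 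The middle inclusion $\sigma_{\mathcal{B}_-}\subseteq sp_{\mathcal{B}_-}$ for the triple tensor tuple is immediate from the definitions in Section 3.

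For the crucial right-hand inclusion I would argue by contraposition, mirroring the proof of Proposition 4.2(i). Let $E=sp_{\mathcal{B}_-}(S)\times sp_{\delta}(T)\cup sp_{\delta}(S)\times sp_{\mathcal{B}_-}(T)$ and take $(\lambda,\mu)\in sp_{\mathcal{B}_-}(S\otimes I,I\otimes T)\setminus E$. Since $sp_{\mathcal{B}_-}\subseteq sp_{\delta}$ (shown in Section 3) and $sp_{\delta}(S\otimes I,I\otimes T)\subseteq sp_{\delta}(S)\times sp_{\delta}(T)$ by Proposition 4.1(i), we get $\lambda\in sp_{\delta}(S)\setminus sp_{\mathcal{B}_-}(S)$ and $\mu\in sp_{\delta}(T)\setminus sp_{\mathcal{B}_-}(T)$; in particular $\lambda\notin sp_{\delta e}(S)$ and $\mu\notin sp_{\delta e}(T)$. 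Using the decomposition $sp_{\mathcal{B}_-}(T)=sp_{\delta e}(T)\cup\tilde{\mathcal A}(T)$ from Section 3, and that $\lambda,\mu$ avoid both the essential split defect spectra and the split-Browder spectra, I would produce bounded lifts $h$, $g$ on the $p=1$ level of $K_1$, $K_2$ with $d_{11}\circ h=I-k_1$, $d_{21}\circ g=I-k_2$ where now $k_1$, $k_2$ are \emph{finite-rank idempotents} (as in Proposition 4.2). Setting $H=(h\otimes I,\,I\otimes g)$ on $X_1\tilde{\otimes}X_2$ gives $d_1\circ H=I-k_1\otimes k_2$ with $k_1\otimes k_2$ a finite-rank idempotent with range $R(k_1)\otimes R(k_2)$, by [5, Lemma 1.1].

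This shows $(\lambda,\mu)\notin sp_{\delta e}(S\otimes I,I\otimes T)$, but we must upgrade from ``semi-Fredholm'' to ``semi-Browder,'' i.e.\ rule out the possibility that $(\lambda,\mu)$ lies in $\tilde{\mathcal A}(S\otimes I,I\otimes T)$. Here I would use the hypothesis $\lambda\notin sp_{\mathcal{B}_-}(S)$, $\mu\notin sp_{\mathcal{B}_-}(T)$ more fully: replacing the single operators $S_i-\lambda_i$, $T_j-\mu_j$ by their $k$-th powers and invoking the spectral mapping property of $sp_{\mathcal{B}_-}$ (Proposition 3.3) together with $R^\infty$ being stable under such replacements, one sees that codim$\,M_k\big((S-\lambda)\otimes I,I\otimes(T-\mu)\big)$ stays uniformly bounded (in fact controlled by the product of the eventually-stabilized codimensions of $M_k(S-\lambda)$ and $M_k(T-\mu)$, via the Künneth-type splitting of the tensor Koszul complex), so it does not tend to $\infty$. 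Hence $(\lambda,\mu)\notin\tilde{\mathcal A}(S\otimes I,I\otimes T)$ either, so $(\lambda,\mu)\notin sp_{\mathcal{B}_-}(S\otimes I,I\otimes T)$, a contradiction. The main obstacle I anticipate is precisely this last step: making rigorous the claim that the finite codimensions on the tensor side are jointly controlled by those of the factors for all powers $k$ simultaneously — this requires a careful application of [5, Theorem 2.2] and [5, Lemma 1.1] to the powered Koszul complexes, rather than just the $p=1$ level. Finally, in the Hilbert-space case all four displayed sets coincide because the outer inclusions become equalities by Proposition 4.1 (Hilbert case) and because $sp_{\mathcal{B}_\pm}=\sigma_{\mathcal{B}_\pm}$ there, the complemented-range/null-space conditions ${\mathcal C}_\pm$ being automatic.
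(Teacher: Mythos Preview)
Your proposal is correct and matches the paper's approach: for the right-hand inclusion the paper uses exactly the decomposition $sp_{\mathcal B_-}=sp_{\delta e}\cup\tilde{\mathcal A}$, handles the $sp_{\delta e}$ part via Proposition~4.2, and for $\tilde{\mathcal A}$ passes to the powered tuples $(S-\lambda)^r$, $(T-\mu)^r$, constructs the tensor homotopy $H_r=(h_r\otimes I,\,I\otimes g_r)$ with finite-rank projectors $k_{1r},k_{2r}$ chosen so that $R(k_{ir})\cong H_0(K_i^r)$, and obtains the bound $\dim H_0(K^r)\le\dim H_0(K_1^r)\cdot\dim H_0(K_2^r)$, which is eventually constant --- so your anticipated obstacle is precisely what the paper makes rigorous (it invokes the spectral mapping theorem for $sp_{\delta e}$ from [4] rather than Proposition~3.3, but either suffices). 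One point you underspecify: the left-hand inclusion is not Proposition~4.1 verbatim; the paper splits $\sigma_{\mathcal B_-}(S)=\sigma_{\Phi_-}(S)\cup\mathcal A(S)$, handles $\lambda\in\sigma_{\Phi_-}(S)$ via Proposition~4.2, and for $\lambda\in\mathcal A(S)$ uses the same powering device in the other direction, lower-bounding $\dim H_0(K^l)\ge\dim H_0(K_1^l)\cdot\dim H_0(K_2^l)\to\infty$ (the factor $\dim H_0(K_2^l)\ge 1$ coming from the spectral mapping theorem for $\sigma_\delta$).
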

\begin{proof} 
\indent First of all, as in Proposition 4.2, we use the notations of Proposition 4.1.\par

\indent Let us consider $(\lambda ,\mu)\in 
\sigma_{{\mathcal B}_{-}}(S)\times \sigma_{\delta }(T)$.
If $\lambda\in \sigma_{\Phi_{-}}(S)$, then, by Proposition 4.2, $(\lambda ,\mu)
\in\sigma_{\Phi_{-}}(S)\times \sigma_{\delta }(T)\subseteq
\sigma_{\Phi_{-}}(S\otimes I,I\otimes T)\subseteq
\sigma_{{\mathcal B}_{-}}(S\otimes I,I\otimes T)$.\par

\indent Now, if $\lambda\in{\mathcal A }(S)$, since $\mu\in 
\sigma_{\delta}(T)$, by the definition of the map $\varphi$ in [5, Theorem 2.2], the 
grading of the complex $K_1$, $K_2$, and $K_1\tilde{\otimes}K_2$, and by
the isomorphism $K\cong K_1\tilde{\otimes}K_2$, we have that
$\dim H_0(K)=\dim H_0(K_1\tilde{\otimes} K_2)\ge \dim H_0
(K_1)\times \dim H_0(K_2)\ge 1$. In particular,
$(\lambda ,\mu)\in\sigma_{\delta}(S\otimes I ,I\otimes T)$.\par

\indent Moreover, if $\dim H_0(K)=\infty$, then 
$(\lambda,\mu)\in\sigma_{\Phi_{-}}(S\otimes I,I\otimes T)
\subseteq\sigma_{{\mathcal B}_{-}}(S\otimes I,I\otimes T)$.\par

\indent On the other hand, if we suppose that $(\lambda ,\mu)\notin 
\sigma_{\Phi_{-}}(S\otimes I,I\otimes T)$. Then, we consider 
the tuples of operators 
$(S-\lambda)^l=((S_1-\lambda_1)^l ,\ldots ,(S_n-\lambda_n )^l)$
and $(T-\mu)^l=((T_1-\mu_1)^l ,\ldots ,(T_m-\mu_m )^l)$,
and we denote by $K_1^l$ and $K_2^l$ the 
Koszul complexes associated to the tuples $(S-\lambda)^l$ and
$(T-\mu)^l$, respectively. Moreover, if we denote by $K^l$ the Koszul complex 
associated to the tuple $((S-\lambda)^l\otimes I, 
I\otimes (T-\mu)^l)$, as above, $K^l$ is isomorphic to the total complex of the double complex 
of the tensor product of $K_1^l$ and $K_2^l$, i.e.,
$K^l\cong K_1^l\tilde{\otimes}K_2^l$.\par

\indent In addition, as we have seen 
for the complexes $K_1$, $K_2$, $K$, and $K_1\tilde{\otimes}K_2$, we have that $\dim H_0(K^l)=
\dim H_0(K_1^l\tilde{\otimes} K_2^l)
\ge \dim H_0(K_1^l)\times \dim H_0(K_2^l)$.
Now, since $\mu\in\sigma_{\delta}(T)$, by the analytic spectral
mapping theorem for the defect spectrum (see [4, Corollary 2.1], we have that
$\dim H_0(K_2^l)\neq 0$. In addition, since $\dim H_0(K_1^l)=
\hbox{\rm codim} M_l(S-\lambda)$, and since
$\lambda\in{\mathcal A}(S)$, then $\dim H_0(K^l)\xrightarrow[l\to \infty]{ }
\infty$. However, $\dim H_0(K^l)=\hbox{\rm codim } 
M_l((S-\lambda)\otimes I, I\otimes (T-\mu))$. In particular, $(\lambda ,\mu)\in
{\mathcal A}(S\otimes I ,I\otimes T)\subseteq \sigma_{{\mathcal B}_{-}}
(S\otimes I, I\otimes T)$.\par

\indent By means of a similar argument it is possible to see that
$\sigma_{\delta}(S)\times\sigma_{{\mathcal B}_{-}}(T)\subseteq
\sigma_{{\mathcal B}_{-}}(S\otimes I,I\otimes T)$.\par

\indent The middle inclusion is clear.\par

\indent  In order to see the right hand inclusion, let us consider $(\lambda ,\mu)\in 
sp_{{\mathcal B}_{-}}(S\otimes I,I\otimes T)$. If $(\lambda ,\mu)\in
sp_{\delta e}(S\otimes I,I\otimes T)$, then by Proposition 4.2,
$(\lambda ,\mu)\in sp_{\delta e }(S)\times sp_{\delta}(T)\cup
sp_{\delta  }(S)\times sp_{\delta e}(T)\subseteq 
sp_{{\mathcal B}_{-} }(S)\times sp_{\delta}(T)\cup
sp_{\delta  }(S)\times sp_{{\mathcal B}_{-}}(T)$.\par

\indent On the other hand, if $(\lambda ,\mu)\in \tilde{\mathcal A}
(S\otimes I,I\otimes T)$, since by Proposition 4.1 $sp_{{\mathcal B}_{-}}
(S\otimes I,I\otimes T)\subseteq sp_{\delta}(S\otimes I,I\otimes T)
\subseteq sp_{\delta}(S)\times sp_{\delta }(T)$,
if $(\lambda ,\mu)\notin (sp_{{\mathcal B}_{-} }(S)\times sp_{\delta}(T)
\cup sp_{\delta  }(S)\times sp_{{\mathcal B}_{-}}(T))$, then
$\lambda\notin sp_{{\mathcal B}_{-} }(S)$ and $\mu\notin 
sp_{{\mathcal B}_{-} }(T)$. In particular, $\lambda\notin 
sp_{\delta e}(S)$ and $\mu\notin sp_{\delta e}(T)$, 
and there is $l\in\mathbb{N}$ such that for all $r\ge l$,
$\dim H_0(K_1^r)=\dim H_0(K_1^l)$
and $\dim H_0(K_2^r)=\dim H_0(K_2^l)$.\par

\indent In addition, by the analytic spectral mapping theorem of 
the essential split defect spectrum ([4, Corollary 2.6]), the complex $K_1^r$ and 
$K_2^r$ are Fredholm split for all $r\in\mathbb{N}$ at level $p=0$. 
In particular, for all $r\in\mathbb{N}$ there are bounded linear maps 
$h_r\colon X_1\to X_1\otimes\wedge^1 \mathbb{C}^n$
and $g_r\colon X_2\to X_2\otimes\wedge^1 \mathbb{C}^m$,
and finite rank projectors (see Proposition 4.2),
$k_{1r}\colon X_1\to X_1$ and
$k_{2r}\colon X_2\to X_2$, such that
$$
d_{11}^r\circ h_r=I-k_{1r},\hskip.5cm 
d_{21}^r\circ g_r=I-k_{2r},
$$
where $d_{11}^r$ and $d_{21}^r$ are the
chain maps of the complex $K_1^r$ and $K_2^r$
at level $p=1$, respectively.\par

\indent  Moreover, since the complexes $K_1^r$ and $K_2^r$ are Fredholm split at level $p=0$, by [4, Theorem 2.7]
the complexes $K_1^r$ and $K_2^r$ are Fredholm at level $p=0$ and 
$N(d_{11}^r)$ and $N(d_{21}^r)$ have
direct complements in $X_1\otimes \wedge^1\mathbb{C}^n$ and $X_2\otimes \wedge^1\mathbb{C}^m$ respectively.
Now, by an argument similar to [4, Theorem 2.7] or [5, Proposition 2.1], we have that
the maps $h_r$, $g_r$ $k_{1r}$ and $k_{2r}$ may be 
chosen in the following way. If $N_1^r$ and $N_2^r$ are finite dimensional
subspaces of $X_1$ and $X_2$ respectively, such that $R(d_{11}^r
)\oplus N_1^r=X_1$ and $R(d_{21}^r)
\oplus N_2^r=X_2$ and $L_1^r$ and $L_2^r$ are closed linear subspaces of
$X_1\otimes\wedge^1\mathbb{C}^n$ and  
$X_2\otimes\wedge^1\mathbb{C}^m$ respectively, such that
$N (d_{11}^r)\oplus L_1^r= X_1\otimes
\wedge^1\mathbb{C}^n$, and $N (d_{21}^r)\oplus L_2^r=
X_2\otimes
\wedge^1\mathbb{C}^m$, then, $k_{1r}$, respectively $k_{2r}$, may be chosen
as the projector onto $N_1^r$, respectively $N_2^r$, whose null space
coincide with $R(d_{11}^r)$, respectively  $R(d_{21}^r)$,
and the map $h_r$, respectively $g_r$, may be chosen such that 
$h_r\circ d_{11}^r=I\mid L_1^r$, respectively
$g_r\circ d_{21}^r=I\mid L_2^r$, $h_r\mid N_1^r=0$, respectively
$g_r\mid N_2^r=0$. In particular, $R(k_{1r})\cong
H_0(K_1^r)$ and $R(k_{2r})\cong
H_0(K_2^r)$.\par        

\indent Now, as in Proposition 4.2, for all
$r\in\mathbb{N}$ we have a well defined map $H_r\colon X_1\tilde{\otimes}
X_2\to (K_1^r\tilde{\otimes}K_2^r)_1$ such that
$$
d^r_1\circ H_r =I-k_{1r}\otimes k_{2r},
$$
where $d^r_1$ is the boundary map of the complex $K_1^r\tilde{\otimes}K_2^r$
at level $p=1$.\par  
\indent Then, since for all $r\in \mathbb{N}$, $R(k_{1r}\otimes k_{2r}) =
R(k_{1r})\otimes R(k_{2r})$ (see Proposition 4.2), for all $r\ge l$ we have that

\begin{align*} 
\dim H_0(K^r)&=\dim H_0(K_1^r\tilde
{\otimes}K_2^r)\le \dim R(k_{1r}\otimes k_{2r})\\
& =\dim R(k_{1r})\times \dim R(k_{2r})=\dim H_0(K_1^r)
\times \dim H_0(K_2^r)\\
&=\dim H_0(K_1^l)
\times \dim H_0(K_2^l),\\\end{align*}

\noindent which is impossible for $(\lambda ,\mu)\in \tilde{{\mathcal A}}(S\otimes I,
I\otimes T)$ and $\dim H_0(K^r)=\hbox{\rm codim }M_r((S-\lambda)\otimes I ,I\otimes (T-\mu))$.\par

\indent By means of a similar argument it is possible to see
the second statement.
\end{proof}
 
\section{Operator ideals between Banach spaces}

\noindent In this section we extend our descriptions of the semi-Fredholm 
joint spectra and the semi-Browder joint spectra for  
tuples of left and right multiplications defined on an operator ideal between Banach spaces in the sense
of [5]. We first recall the definition of such an ideal and then we
introduce the tuples
with which we shall work. For a complete exposition see [5].\par

\indent An operator ideal $J$ between Banach spaces $X_2$ 
and $X_1$ will be a \it linear subspace \rm of ${\rm L}(X_2,X_1)$, equiped 
with a 
space norm $\alpha$ such that\par
(i)  $x_1\otimes x_2{'}\in J$ and $\alpha (x_1\otimes x_2{'})=
\parallel x_1
\parallel \parallel x_2\parallel$,\par
(ii) $SAT\in J$ and $\alpha (SAT)\le \parallel S\parallel \alpha (A) 
\parallel T\parallel$, \par

\noindent where $x_1\in X_1$, $x_2{'}\in X_2{'}$, $A\in J$, 
$S\in {\rm L}
(X_1)$,
$T\in{\rm L}(X_2)$, and $x_1\otimes x_2{'}$ is the usual rank one
operator $X_2\to X_1$, $x_2\to <x_2,x_2{'}>x_1$.\par

\indent Examples of this kind of ideals are given in [5, section 1].\par

\indent Let us recall that such operator ideal $J$ is naturally a tensor
product relative to $<X_1,X_1{'}>$ and $<X_2{'},X_2>$, with the 
bilinear mappings
$$
X_1\times X_2{'}\to J,\hbox{  } (x_1,x_2{'})\to x_1\otimes x_2{'},
$$
$$
{\mathcal L}(X_1)\times{\mathcal L}(X_2{'})\to {\rm L}(J),
\hbox{  } (S,T{'})\to S\otimes T{'},
$$
where $S\otimes T{'} (A)=SAT$.\par 

\indent On the other hand, if $X$ is a Banach space and $U\in{\rm L}
(X)$, we denote by $L_U$ and $R_U$ the operators of left and 
right multiplication in ${\rm L}(X)$, respectively, i.e., if $V\in {\rm L}(X)$, then
$L_U(V)= UV$ and $R_U(V)=VU$. \par  

\indent Now, if $S=(S_1,\ldots ,S_n)$ and $T=(T_1,\ldots , 
T_m)$ are tuples of commuting operators defined on $X_1$ and
$X_2$ respectively, if $J$ is seen as a tensor product of $X_1$ and $X_2$
relative to $<X_1,X_1{'}>$ and $<X_2{'},X_2>$, then the tuple of left 
and right multilications $(L_S,R_T)$ defined on $L(J)$, 
$(L_S, R_T)=(L_{S_1},\ldots ,L_{S_n}, R_{T_1},\ldots ,R_{T_m})$,
may be identified with the $(n+m)$-tuple $(S\otimes I, I\otimes T{'})$
defined on $X_1\tilde{\otimes}X_2{'}$, where $T{'}=(T_1{'},\ldots ,
T_m{'})$ and for all $i=1,\ldots ,m$, $T_i{'}$ is the adjoint map
associated to $T_i$ (see [5, Theorem 3.1]).\par

\indent In addition, if $\lambda\in \mathbb{C}^n$ and $\mu\in\mathbb{C}^m$,
and if we denote by $K_1$ and $K_2{'}$ the Koszul
complexes associated to $S$ and $\lambda$ and $T{'}$ and $\mu$
respectively, then the total complex of the double complex 
obtained from the tensor product of $K_1$ and $K_2{'}$, 
$K_1\tilde{\otimes}K_2{'}$ is isomorphic to $\tilde
{K}$, the Koszul complex associated to $(S\otimes I ,
I\otimes T{'})$ and $(\lambda ,\mu)$ on $X_1\tilde{\otimes }X_2$, 
which is naturally isomorphic
to the Koszul complex of $(L_S,R_T)$ and $(\lambda ,\mu)$ on
${\rm L}(J)$, see [5, section 3].\par

\indent In order to state our description of the semi-Fredholm and the
semi-Browder joint spectra of the tuple $(L_S, R_T)$, 
as we have done in section 4, we first
describe the defect and the approximate point spectra of the 
mentioned tuple.\par

\begin{pro} Let $X_1$ and $X_2$ be two complex Banach
spaces, and $J$ and operator ideal between
$X_2$ and $X_1$ in the sense of [5]. Let us
consider two tuples of commuting operators defined on $X_1$ and 
$X_2$, $S$ and $T$ respectively. Then, if $(L_S,R_T)$ 
is the tuple of left and right multiplications defined on $L(J)$, we have that
\par
\noindent  {\rm (i)} $\sigma_{\delta}(S)\times \sigma_{\pi }(T)\subseteq
\sigma_{\delta}(L_S,R_T)\subseteq sp_{\delta}
(L_S,R_T)
 \subseteq sp_{\delta}(S)
\times sp_{\pi}(T),$\par
\noindent  {\rm (ii)} $
\sigma_{\pi}(S)\times \sigma_{\delta }(T)\subseteq
\sigma_{\pi}(L_S,R_T) \subseteq sp_{\pi }
(L_S,R_T)\subseteq sp_{\pi}(S)\times sp_{\delta}(T).
$\par
\noindent In addition, if $X_1$ and $X_2$ are Hilbert spaces, the above inclusions
are equalities.
\end{pro}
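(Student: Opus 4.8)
The plan is to mirror the proof of Proposition 4.1, using the fact (recalled in section 6) that the Koszul complex $\tilde K$ of $(L_S,R_T)-(\lambda,\mu)$ on $L(J)$ is isomorphic to the total complex $K_1\tilde{\otimes}K_2'$ of the tensor product of the Koszul complexes $K_1$ of $S-\lambda$ on $X_1$ and $K_2'$ of $T'-\mu$ on $X_2'$. The only change relative to Proposition 4.1 is that $X_2$ is replaced by $X_2'$ and $T$ by $T'$, so I must translate the spectra of $T$ into spectra of $T'$ and vice versa. The key dictionary is the duality between defect and approximate point spectra: $\sigma_\delta(T')=\sigma_\pi(T)$, $\sigma_\pi(T')=\sigma_\delta(T)$, and similarly for the split versions, $sp_\delta(T')=sp_\pi(T)$ and $sp_\pi(T')=sp_\delta(T)$ (these follow from the corresponding relations in [4], since taking adjoints interchanges the maps $\hat T$ and $\tilde{T'}$ up to the standard identifications). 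Once this dictionary is in place, each inclusion of the present proposition becomes an instance of the corresponding inclusion of Proposition 4.1 applied to the pair $(S,T')$ and the tensor product $X_1\tilde{\otimes}X_2'$.

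Concretely, I would first state that $\tilde K\cong K_1\tilde{\otimes}K_2'$ and that the differential spaces satisfy $\tilde{\mathcal K}\cong{\mathcal K}_1\tilde{\otimes}{\mathcal K}_2'$, exactly as in the observation preceding Proposition 4.1. Then, for the left-hand inclusion of (i), I take $(\lambda,\mu)\in\sigma_\delta(S)\times\sigma_\pi(T)=\sigma_\delta(S)\times\sigma_\delta(T')$ and apply [5, Theorem 2.2] to the differential spaces ${\mathcal K}_1$, ${\mathcal K}_2'$ and ${\mathcal K}_1\tilde{\otimes}{\mathcal K}_2'$, obtaining $H_0(\tilde K)\neq 0$, hence $(\lambda,\mu)\in\sigma_\delta(L_S,R_T)$; the middle inclusion is trivial. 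For the right-hand inclusion, I argue contrapositively: if $(\lambda,\mu)\notin sp_\delta(S)\times sp_\pi(T)=sp_\delta(S)\times sp_\delta(T')$, then either $\lambda\notin sp_\delta(S)$, giving a bounded $h$ with $d_{11}\circ h=I$ on $X_1$ and hence $H=h\otimes I$ with $d_1\circ H=I$ on $X_1\tilde{\otimes}X_2'$, so $(\lambda,\mu)\notin sp_\delta(L_S,R_T)$; or $\mu\notin sp_\delta(T')=sp_\pi(T)$, which is handled symmetrically using a bounded splitting of the Koszul complex $K_2'$ at the appropriate level. Statement (ii) is proved by the same argument with the roles of $S$ and $T'$ (equivalently, defect and approximate point) interchanged, i.e. by applying part (i) of Proposition 4.1 in the other variable.

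Finally, for the Hilbert space case: if $X_1$ and $X_2$ are Hilbert spaces, then $X_2'$ is again a Hilbert space and $T'$ is an honest tuple of Hilbert space operators, so Proposition 4.1 gives equalities throughout for the pair $(S,T')$ on $X_1\tilde{\otimes}X_2'$; translating back through the duality dictionary yields equalities in (i) and (ii). The main obstacle I anticipate is bookkeeping rather than depth: one must be careful that the isomorphism $\tilde K\cong K_1\tilde{\otimes}K_2'$ respects the grading in the way [5, Theorem 2.2] requires (so that the estimate is on $H_0$ and not on some other homology), and one must confirm that the duality identities $sp_\delta(T')=sp_\pi(T)$ and $sp_\pi(T')=sp_\delta(T)$ hold at the level of the \emph{split} spectra and their essential versions, not merely the plain ones — this last point is exactly where [4, Theorem 2.7] and the description of these spectra recalled in section 3 are used.
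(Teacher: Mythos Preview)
Your overall strategy coincides with the paper's: identify $(L_S,R_T)$ with $(S\otimes I,I\otimes T')$ on $J=X_1\tilde{\otimes}X_2'$, use $\tilde{\mathcal K}\cong{\mathcal K}_1\tilde{\otimes}{\mathcal K}_2'$ together with [5, Theorem~2.2] for the left inclusion, and build a splitting $H$ on the tensor product for the right inclusion. The duality $\sigma_\delta(T')=\sigma_\pi(T)$ (which the paper cites as [8, Theorem~2.0]) is exactly the dictionary you use.

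There is, however, one point you treat as mere bookkeeping that the paper handles with care and that is not automatic. You propose to obtain the right-hand inclusion by ``applying Proposition~4.1 to the pair $(S,T')$ on $X_1\tilde{\otimes}X_2'$.'' This does not work as a black-box citation: Proposition~4.1 is stated for a tensor product relative to $\langle X_1,X_1'\rangle$ and $\langle X_2,X_2'\rangle$ (both of type~(A)), whereas $J$ is a tensor product relative to $\langle X_1,X_1'\rangle$ and $\langle X_2',X_2\rangle$ (type~(B) on the second factor). In the type~(B) pairing, ${\mathcal L}(X_2')=\{V'\colon V\in L(X_2)\}$, so the bilinear map $(U,V')\mapsto U\otimes V'$ into $L(J)$ is only defined when the second argument is an \emph{adjoint}. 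Consequently, in the case $\mu\notin sp_\pi(T)$ you cannot simply take an arbitrary bounded splitting $g'$ of $K_2'$ at level $0$ and form $I\otimes g'$: you must know that the matrix entries of $g'$ are adjoint operators. The paper secures this by starting from the splitting of $K_2$ at level $p=m$ (available since $\mu\notin sp_\pi(T)$) and passing to adjoints via [8, Lemma~2.2], which guarantees that the resulting homotopy on $K_2'$ has adjoint components; only then is $I\otimes g'$ well defined on $J$. So the relevant input is not the set-theoretic identity $sp_\delta(T')=sp_\pi(T)$ but its constructive content, and the reference you want is [8, Lemma~2.2] rather than [4, Theorem~2.7]. (In the Hilbert space case this obstruction evaporates, since then ${\mathcal L}(X_2')=L(X_2')$, which is why your final paragraph goes through.)
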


\begin{proof}
 
\indent As we have said, $J$ may be seen as the tensor product of $X_1$
and $X_2{'}$, $X_1\tilde{\otimes}X_2{'}$, relative to $<X_1,X_1{'}>$
and $<X_2,X_2{'}>$, and $(L_S,R_T)$ may be identified with the
tuple $(S\otimes I ,I\otimes T{'})$. Moreover, if ${\mathcal K}_1 
$ and ${\mathcal K}_2{'}$ denote the differential space 
associated to $K_1$
and $K_2{'}$ respectively, then $\tilde{\mathcal K}$,
the differentiable space associated to $\tilde{K}$, is
isomorphic to ${\mathcal K}_1\tilde{\otimes}{\mathcal K}_2{'}$ 
(see [5, section 3]).\par
   
 \indent In addition, since for all $i=1,\ldots ,n$ $S_i\in {\rm L}(X_1)$
and for all $j=1,\ldots ,m$ $T_j\in {\rm L}(X_2)$, the differential
spaces ${\mathcal K}_1 $ and ${\mathcal K}_2{'}$ 
satisfy the conditions of [5, Theorem 2.2], and by means of an argument similar
to the one of Proposition 4.1 we have that
$$
\sigma_{\delta}(S)\times \sigma_{\delta }(T{'})\subseteq
\sigma_{\delta}(S\otimes I ,I\otimes T{'})=
\sigma_{\delta}(L_S,R_T).
$$
\indent However, by [8, Theorem 2.0], $\sigma_{\pi}(T)=\sigma_{\delta}
(T{'})$. Thus, we have proved the left hand side inclusion of the first
statement.\par

\indent The middle inclusion is clear.\par    

\indent In order to see the right hand inclusion, let us first observe
that if $\mu\notin sp_{\pi}(T)$, then $\mu\notin sp_{\delta}(T{'})$.
\par

\indent In fact, if $K_2$ is split at level $p=m$, then by
[8, Lemma 2.2] $K_2{'}$ is split at level $p=0$.\par

\indent Now, by the isomorphism of [8, Lemma 2.2], if we think the homotopy 
operator which gives the
splitting for the complex $K_2{'}$ at level $p=0$ as a matrix, then each
component of the matrix is an adjoint operator. In particular, 
by means of the properties of the tensor product of [5], it is possible to 
adapt the proof of the corresponding inclusion of
Proposition 4.1 in order to see that if $(\lambda ,\mu)\notin sp_{\delta}(S)
\times sp_{\pi}(T)$, then $(\lambda ,\mu)\notin sp_{\delta}(S\otimes
I ,I\otimes T{'})=sp_{\delta}(L_S ,R_T)$.\par    

\indent The second statement may be proved by means of a similar
argument.
\end{proof}

\indent In the following proposition we give our description of the 
semi-Fredholm joint spectra of the tuple $(L_S,R_T)$.\par

\begin{pro} Let $X_1$ and $X_2$ be two complex Banach
spaces, and $J$ and operator ideal between
$X_2$ and $X_1$ in the sense of [5]. Let us
consider two tuples of commuting operators defined on $X_1$ and 
$X_2$, $S$ and $T$ respectively. Then, if $(L_S,R_T)$ 
is the tuple of left and right multiplications defined on $L(J)$, 
we have that

\begin{align*}
{\rm (i)}\hbox{  }&\sigma_{\Phi_{-}}(S)\times \sigma_{\pi }(T)\cup 
\sigma_{\delta}(S)\times\sigma_{\Phi_{+}}(T)\subseteq
\sigma_{\Phi_{-}}(L_S,R_T)\subseteq\\
& sp_{\delta e}(L_S,R_T)
\subseteq sp_{\delta e }(S)\times sp_{\pi}(T)\cup
sp_{\delta  }(S)\times sp_{\pi e}(T),\\\end{align*}

\begin{align*}
{\rm (ii)}\hbox{  }&\sigma_{\Phi_{+}}(S)\times \sigma_{\delta }(T)\cup 
\sigma_{\pi}(S)\times\sigma_{\Phi_{-}}(T)\subseteq
\sigma_{\Phi_{+}}(L_S,R_T)\subseteq\\
& sp_{\pi e}(L_S,R_T)
\subseteq sp_{\pi e }(S)\times sp_{\delta}(T)\cup
sp_{\pi  }(S)\times sp_{\delta e}(T).\\\end{align*}

\noindent In addition, if $X_1$ and $X_2$ are Hilbert spaces,
the above inclusions are equalities.
\end{pro}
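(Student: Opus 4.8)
The plan is to reduce the assertion to Proposition 4.2 by way of the identification recalled at the start of the section. Namely, $J$ is a tensor product of $X_1$ and $X_2'$ relative to the dual pairings $<X_1,X_1'>$ and $<X_2',X_2>$, the tuple $(L_S,R_T)$ corresponds under this identification to $(S\otimes I,I\otimes T')$ acting on $X_1\tilde{\otimes}X_2'$, and the Koszul complex $\tilde{K}$ of $(S\otimes I,I\otimes T')$ at $(\lambda,\mu)$ is isomorphic to the total complex $K_1\tilde{\otimes}K_2'$ of the tensor product of the Koszul complexes of $S-\lambda$ and $T'-\mu$ (see [5, section 3]). First I would check, exactly as in the proof of Proposition 6.1, that since every $S_i$ lies in ${\rm L}(X_1)$ and every $T_j$ in ${\rm L}(X_2)$, the differential spaces ${\mathcal K}_1$ and ${\mathcal K}_2'$ satisfy the hypotheses of [5, Theorem 2.2]. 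This makes available here all the homological ingredients used in Propositions 4.1 and 4.2 --- the estimate $\dim H_0(K_1\tilde{\otimes}K_2')\ge\dim H_0(K_1)\times\dim H_0(K_2')$, its upper counterpart obtained via finite rank projectors, the well-definedness of the homotopies $h\otimes I$ and $I\otimes g$, and the identity $d_1\circ H=I-k_1\otimes k_2$ --- so that the argument of Proposition 4.2 runs verbatim with $X_2$, $T$, $K_2$ replaced by $X_2'$, $T'$, $K_2'$. The outcome of this step is the two chains of inclusions of Proposition 4.2 written for the pair $(S,T')$.

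What remains is a purely formal translation of the joint spectra of $T'$ into those of $T$. By [8, Theorem 2.0] one has $\sigma_{\delta}(T')=\sigma_{\pi}(T)$ and $\sigma_{\pi}(T')=\sigma_{\delta}(T)$; by [8, Lemma 2.2], which exchanges splitting of a Koszul complex at the top level with splitting of the adjoint complex at the bottom level and conversely, one obtains $\sigma_{\Phi_{-}}(T')=\sigma_{\Phi_{+}}(T)$ and $\sigma_{\Phi_{+}}(T')=\sigma_{\Phi_{-}}(T)$. Since, as noted in the proof of Proposition 6.1, the homotopy operators furnishing these splittings are matrices whose entries are adjoint operators, the same identifications persist for the split and the essential split spectra (see also [4]), so that $sp_{\delta}(T')=sp_{\pi}(T)$, $sp_{\delta e}(T')=sp_{\pi e}(T)$, together with the two opposite equalities. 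Substituting all of these into the chains produced in the first paragraph, and using $\sigma_{\Phi_{-}}(S\otimes I,I\otimes T')=\sigma_{\Phi_{-}}(L_S,R_T)$, $sp_{\delta e}(S\otimes I,I\otimes T')=sp_{\delta e}(L_S,R_T)$ and the analogous identities for the upper spectra, turns them precisely into the asserted inclusions (i) and (ii).

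For the Hilbert space statement I would simply combine the two preceding paragraphs with the Hilbert part of Proposition 4.2: when $X_1$ and $X_2$ are Hilbert spaces so is $X_2'$, the outer inclusions of Proposition 4.2 for the pair $(S,T')$ are then equalities, and the translation identities above force equality throughout for $(L_S,R_T)$.

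The step I expect to be the main obstacle is the duality bookkeeping in the second paragraph: one must be certain that [8] (and [4]) supply the adjoint-complex identities not only for $\sigma_{\pi}$ and $\sigma_{\delta}$ but, in exactly the form used here, also for the semi-Fredholm, the split, and the essential split spectra, and that the ``matrix of adjoint operators'' observation really does transport the finite rank projector refinement of Proposition 4.2 to the complex $K_2'$. If [8] does not state the essential and split versions literally, the fallback is to reprove Proposition 4.2 directly with $K_2'$ in place of $K_2$, invoking [8, Lemma 2.2] only at the level of individual homotopy operators --- which is precisely the device already used to prove Proposition 6.1.
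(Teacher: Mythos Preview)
Your proposal is correct and follows essentially the same route as the paper: identify $(L_S,R_T)$ with $(S\otimes I,I\otimes T')$ on $X_1\tilde{\otimes}X_2'$, rerun the argument of Proposition 4.2 with $K_2'$ in place of $K_2$, and use [8, Theorem 2.0] and [8, Lemma 2.2] to pass between the spectra of $T'$ and those of $T$, the key technical point being that the homotopies on the $K_2'$ side are matrices of adjoint operators so that $I\otimes g'$ is well defined in the tensor product relative to $<X_2',X_2>$. The only difference in presentation is that the paper does not assert the full duality \emph{equalities} $\sigma_{\Phi_-}(T')=\sigma_{\Phi_+}(T)$, $sp_{\delta e}(T')=sp_{\pi e}(T)$, etc., but only the one-sided implications actually needed (e.g.\ $\sigma_{\Phi_+}(T)\subseteq\sigma_{\Phi_-}(T')$ for the left inclusion, and $\mu\notin sp_{\pi e}(T)\Rightarrow\mu\notin sp_{\delta e}(T')$ with adjoint-matrix homotopy for the right one), translating step by step rather than at the end --- which is precisely the ``fallback'' you describe.
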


\begin{proof}

\indent By means of an argument similar to the one of Proposition 4.2,
adapted as we have done in Poposition 6.1, it is possible to see that
$$
\sigma_{\Phi_{-}}(S)\times \sigma_{\delta }(T{'})\cup 
\sigma_{\delta}(S)\times\sigma_{\Phi_{-}}(T{'})\subseteq
\sigma_{\Phi_{-}}(S\otimes I,I\otimes T{'})=
\sigma_{\Phi_{-}}(L_S,R_T).
$$
\indent However, by [8, Theorem 2.0] $\sigma_{\delta }(T{'})=\sigma_{\pi}(T)$,
and by elementary properties of the adjoint of an operator it is easy
to see that $\sigma_{\Phi_{+}}(T)\subseteq \sigma_{\Phi_{-}}
(T{'})$. Thus, we have seen the left hand side inclusion of the 
first statement.\par

\indent The middle inclusion is clear.\par 

\indent Let us consider $(\lambda ,\mu)\in sp_{\delta e}
(L_S,R_T)\setminus (sp_{\delta e }(S)\times sp_{\pi}(T)\cup
sp_{\delta  }(S)\times sp_{\pi e}(T))$. By Proposition 6.1 we
have that $\lambda\in sp_{\delta}(S)\setminus sp_{\delta e}(S)$
and $\mu\in sp_{\pi}(T)\setminus sp_{\pi e}(T)$. However, by
[8, Lemma 2.2] and elementary properties of the adjoint of an operator
we have that $\mu\in sp_{\delta}(T{'})\setminus sp_{\delta e}(T{'})$.
Then, as in Proposition 4.2, there are two linear bounded maps
$h\colon X_1\to X_1\otimes\wedge^1\mathbb{C}^n$, $g{'}\colon X_2{'}\to 
X_2{'}\otimes\wedge^1\mathbb{C}^m$, and two finite rank projectors
$k_1\colon X_1\to X_1$ and $k_2{'}\colon X_2\to X_2$ such that
$$
d_{11}\circ h=I-k_1,\hskip.5cm d_{21}{'}\circ g{'}=
I-k_2{'},
$$  
where $d_{21}{'}$ is the boundary map of the complex $K_2{'}$ 
at level $p=1$.\par

\indent Now, by the isomorphism of [8, Lemma 2.2], if we think the map $g{'}$
as a matrix, then each component of the matrix is an adjoint operator.
Then, by the properties of the tensor product introduced
in [5], we may consider the well defined map 
$$
H\colon X_1\tilde{\otimes} X_2{'} \to (K_1\tilde{\otimes}
K_2{'})_1,\hskip.5cm H=(h\otimes I,I\otimes g{'}).
$$
Now, by an argument similar to the one of Proposition 4.2, it is easy
to see that $(\lambda ,\mu)\notin sp_{\delta e}(S\otimes I,
I\otimes T{'} )=sp_{\delta e}(L_S ,R_T)$,
which is impossible by our assumptions.\par

\indent By means of a similar argument it is possible to see the second
statement.
\end{proof}

\indent We now give our description of the semi-Browder joint spectra
of the tuple of left and right multiplications $(L_S ,R_T)$ defined
on ${\rm L}(J)$.\par

\begin{thm} Let $X_1$ and $X_2$ be two complex Banach
spaces, and $J$ and operator ideal between
$X_2$ and $X_1$ in the sense of [5]. Let us
consider two tuples of commuting operators defined on $X_1$ and 
$X_2$, $S$ and $T$ respectively. Then, if $(L_S,R_T)$ 
is the tuple of left and right multiplications defined on $L(J)$, we have that

\begin{align*}
{\rm (i)}\hbox{  }&\sigma_{{\mathcal B}_{-}}(S)\times \sigma_{\pi }(T)\cup 
\sigma_{\delta}(S)\times\sigma_{{\mathcal B}_{+}}(T)\subseteq
\sigma_{{\mathcal B}_{-}}(L_S,R_T)\subseteq\\
& sp_{{\mathcal B}_{-}}(L_S,R_T)
\subseteq sp_{{\mathcal B}_{-} }(S)\times sp_{\pi}(T)\cup
sp_{\delta  }(S)\times sp_{{\mathcal B}_{+}}(T),\\\end{align*}

\begin{align*}
{\rm (ii)}\hbox{ }&\sigma_{{\mathcal B}_{+}}(S)\times \sigma_{\delta }(T)\cup 
\sigma_{\pi}(S)\times\sigma_{{\mathcal B}_{-}}(T)\subseteq
\sigma_{{\mathcal B}_{+}}(L_S,R_T)\subseteq\\
& sp_{{\mathcal B}_{+}}(L_S,R_T)
\subseteq sp_{{\mathcal B}_{+} }(S)\times sp_{\delta}(T)\cup
sp_{\pi  }(S)\times sp_{{\mathcal B}_{-}}(T),\\\end{align*}

\noindent In addition, if $X_1$ and $X_2$ are Hilbert spaces,
the above inclusions are equalities.
\end{thm}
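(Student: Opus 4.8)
The plan is to mimic the structure of Theorem 5.1, transferring everything from the tensor-product tuple $(S\otimes I,I\otimes T)$ to the left-right multiplication tuple via the identification $(L_S,R_T)\cong(S\otimes I,I\otimes T')$ on $X_1\tilde\otimes X_2'$, exactly as was done in Propositions 6.1 and 6.2. So the first step is to invoke the isomorphisms $\tilde K\cong K_1\tilde\otimes K_2'$ and $\tilde K^l\cong K_1^l\tilde\otimes (K_2')^l$, together with the two basic translation facts already used in section 6: $\sigma_\delta(T')=\sigma_\pi(T)$ and $\sigma_\pi(T')=\sigma_\delta(T)$ (from [8, Theorem 2.0]), and the corresponding semi-Fredholm translations $\sigma_{\Phi_+}(T)\subseteq\sigma_{\Phi_-}(T')$ and $\sigma_{\Phi_-}(T)\subseteq\sigma_{\Phi_+}(T')$, plus the split analogues $\mu\notin sp_\pi(T)\Rightarrow\mu\notin sp_\delta(T')$ and $\mu\notin sp_{\pi e}(T)\Rightarrow\mu\notin sp_{\delta e}(T')$ from [8, Lemma 2.2].

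For the left-hand inclusion of (i), I would take $(\lambda,\mu)\in\sigma_{{\mathcal B}_-}(S)\times\sigma_\pi(T)=\sigma_{{\mathcal B}_-}(S)\times\sigma_\delta(T')$ and run the homological argument of Theorem 5.1 verbatim on $K_1\tilde\otimes K_2'$: if $\lambda\in\sigma_{\Phi_-}(S)$ use Proposition 6.2; if $\lambda\in\mathcal A(S)$, use $\dim H_0(\tilde K^l)\ge\dim H_0(K_1^l)\times\dim H_0((K_2')^l)$ together with $\dim H_0((K_2')^l)\ne0$ (analytic spectral mapping for the defect spectrum applied to $T'$, since $\mu\in\sigma_\delta(T')$) and $\dim H_0(K_1^l)=\mathrm{codim}\,M_l(S-\lambda)\to\infty$, concluding $(\lambda,\mu)\in\mathcal A(L_S,R_T)$. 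The symmetric inclusion $\sigma_\delta(S)\times\sigma_{{\mathcal B}_+}(T)\subseteq\sigma_{{\mathcal B}_-}(L_S,R_T)$ follows by the same argument with the roles swapped, now using $\mu\in\sigma_{{\mathcal B}_+}(T)$ so that either $\mu\in\sigma_{\Phi_+}(T)\subseteq\sigma_{\Phi_-}(T')$ (apply Proposition 6.2) or $\mu\in\mathcal D(T)$, which forces $\dim H_0((K_2')^l)=\mathrm{codim}\,M_l(T'-\mu)\to\infty$ by [8, Lemma 2.2] and the analytic spectral mapping theorem.

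The middle inclusion is immediate from the definition of the split spectra. For the right-hand inclusion, I would copy the scheme of Theorem 5.1: given $(\lambda,\mu)\in sp_{{\mathcal B}_-}(L_S,R_T)$, if $(\lambda,\mu)\in sp_{\delta e}(L_S,R_T)$ apply Proposition 6.2 and enlarge $sp_{\delta e},sp_{\pi e}$ to $sp_{{\mathcal B}_-},sp_{{\mathcal B}_+}$; otherwise $(\lambda,\mu)\in\tilde{\mathcal A}(L_S,R_T)$, and since $sp_{{\mathcal B}_-}(L_S,R_T)\subseteq sp_\delta(L_S,R_T)\subseteq sp_\delta(S)\times sp_\pi(T)$ by Proposition 6.1, if $(\lambda,\mu)$ avoided the target set we would get $\lambda\notin sp_{{\mathcal B}_-}(S)$ and $\mu\notin sp_{{\mathcal B}_+}(T)$. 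Hence $\lambda\notin sp_{\delta e}(S)$, $\mu\notin sp_{\pi e}(T)$ so $\mu\notin sp_{\delta e}(T')$, the complexes $K_1^r$ and $(K_2')^r$ are Fredholm split at level $p=0$ for all $r$ with eventually stabilizing zeroth homology, and — using the description of the splitting homotopy for $(K_2')^r$ through adjoint operators from [8, Lemma 2.2] — one forms $H_r$ with $d_1^r\circ H_r=I-k_{1r}\otimes k_{2r}'$ and finite-rank projectors $k_{1r}\otimes k_{2r}'$ of range $R(k_{1r})\otimes R(k_{2r}')$, so that $\dim H_0(\tilde K^r)\le\dim H_0(K_1^r)\times\dim H_0((K_2')^r)$ stays bounded, contradicting $(\lambda,\mu)\in\tilde{\mathcal A}(L_S,R_T)$. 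Statement (ii) is obtained from (i) by interchanging the two factors and swapping upper/lower, defect/approximate throughout.

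The main obstacle is not conceptual but bookkeeping: one must keep consistent track of which spectrum of $T$ corresponds under adjunction to which spectrum of $T'$ (the pattern ${\mathcal B}_-\leftrightarrow{\mathcal B}_+$, $\delta\leftrightarrow\pi$, $\Phi_-\leftrightarrow\Phi_+$ together with the split versions), and to verify that every homotopy/projector built on the $X_2'$ side genuinely lies in ${\mathcal L}(X_2')$, i.e. is a tuple of adjoint operators, so that $\otimes I$ is defined on it in the sense of [5]; this is precisely what [8, Lemma 2.2] guarantees, and with that in hand the homological estimates of Theorem 5.1 go through unchanged. In the Hilbert space case every inclusion in Propositions 6.1 and 6.2 is an equality, so the outer and inner bounds in (i) and (ii) coincide, giving the stated equalities.
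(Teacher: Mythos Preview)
Your proposal is correct and follows essentially the same route as the paper: identify $(L_S,R_T)$ with $(S\otimes I,I\otimes T')$, rerun the homological argument of Theorem~5.1 on $K_1\tilde\otimes K_2'$ using Propositions~6.1 and~6.2 in place of~4.1 and~4.2, and translate every $T'$-spectrum back to a $T$-spectrum via the duality dictionary. The one refinement worth noting is that the precise reference for the correspondence $\sigma_{{\mathcal B}_{+}}(T)=\sigma_{{\mathcal B}_{-}}(T')$ (and hence for the stabilization of $\dim H_0((K_2')^r)$ when $\mu\notin sp_{{\mathcal B}_{+}}(T)$) is [7, Theorem~11] rather than [8, Lemma~2.2], which the paper invokes explicitly at both places where you appeal to the ${\mathcal B}_{-}\leftrightarrow{\mathcal B}_{+}$ pattern.
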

\begin{proof} 
\indent In order to see the first statement, let us observe that if $K_1^r$ is the Koszul
complex associated to the  tuple $(S-\lambda)^r=((S_1-\lambda_1)^r,
\ldots ,(S_n-\lambda_n)^r)$, and if $K_2{'}^r$ is the Koszul
complex associated to the tuple $(T{'}-\mu)^r=((T{'}_1-\mu_1)^r,
\ldots ,(T_m-\mu_m)^r)$, then $\tilde{K}^r$, the
Koszul complex associated to the tuple $((S-\lambda)^r\otimes T,
I\otimes (T{'}-\mu)^r)$, is isomorphic to the total complex
obtained from the double complex of the tensor product
of $K_1^r$ and $K_2{'}^r$, i.e., $\tilde{K}^r\cong 
K_1^r\tilde{\otimes}K_2{'}^r$ (see [5, section 3]). \par

\indent Now, we may adapt the proof of the left hand inclusion of
Theorem 5.1, as we have done in Proposition 6.1, using in particular
Proposition 6.2 instead of Proposition 4.2, in order to see that 
$\sigma_{{\mathcal B}_{-}}(S)\times 
\sigma_{\delta }(T{'})\subseteq\sigma_{{\mathcal B}_{-}}(S\otimes I,
I\otimes T{'})=
\sigma_{{\mathcal B}_{-}}(L_S,R_T)$. However, 
by [8, Theorem 2.0], $\sigma_{\pi}(T)=\sigma_{\delta}(T{'})$. Thus,
$\sigma_{{\mathcal B}_{-}}(S)\times 
\sigma_{\pi }(T)\subseteq
\sigma_{{\mathcal B}_{-}}(L_S,R_T)$
\par
\indent A similar argument, using in particular that
$\sigma_{{\mathcal B}_{-}}(T{'})=\sigma_{{\mathcal B}_{+}}(T)$ (see [7, Theorem 11]),
gives us that $\sigma_{\delta}(S)\times\sigma_{{\mathcal B}_{+}}(T)\subseteq
\sigma_{{\mathcal B}_{-}}(L_S,R_T)$.\par

\indent The middle inclusion is clear.\par

\indent In order to see the right hand inclusion, it is possible to adapt 
the proof of the corresponding part of Theorem 5.1. \par

\indent Indeed, if we use Proposition 6.2  instead of Proposition 4.2, we 
have that $sp_{\delta e}(L_S, R_T)\subseteq  sp_{{\mathcal B}_{-} }(S)\times sp_{\pi}(T)\cup
sp_{\delta  }(S)\times sp_{{\mathcal B}_{+}}(T)$. On the other hand,
if we suppose that $(\lambda ,\mu)\in \tilde{\mathcal A}(L_S, R_T)\setminus ( sp_{{\mathcal B}_{-} }(S)\times sp_{\pi}(T)\cup
sp_{\delta  }(S)\times sp_{{\mathcal B}_{+}}(T))$, then it is possible
to adapt the argument of Theorem 5.1 in order to get a contradiction.
However, in order to adapt this part of the proof, we have to observe 
the following facts.\par

\indent First, by [8, Lemma 2.2], if $\mu\notin sp_{\pi e}(T)$, then $\mu
\notin sp_{\delta e}(T{'})$. Moreover, if there exists $l\in\mathbb{N}$
such that for all $r\ge l$ $\dim H_m(K_2^r)=\dim H_m
(K_2^l)$, then by [7, Theorem 11] it is easy to see that
$\dim H_0(K_2{'}^r)=\dim H_0(K_2{'}^l)$, for all $r\ge l$. In
addition, if $\mu \notin sp_{\delta e}(T{'})$, by the analytic
spectral mapping theorem for the essential split defect spectrum, 
the complex $K_2{'}^r$ are Fredholm split for all $r\in \mathbb{N}$,
i.e., there are operators $g_r{'}\colon X_2{'}\to X_2{'}\otimes\wedge^1
\mathbb{C}^m$ and finite rank projectors $k_{2r}{'}\colon X_2{'}\to 
X_2{'}$ such that 
$d_{21}^r{'}\circ g_r{'}=I-k_{2r}{'}$, where $d_{21}^r{'}$ denotes
the chain map of the complex $K_2{'}^r$ at level $p=1$. Furthermore,
by [8, Lemma 2.2], if for $r\in \mathbb{N}$ we think the map $g_r{'}$ as matrix,
then each component of the matrix is an adjoint operator, and by 
elementary properties of the adjoint of an operator, the maps 
$g_r{'}$ and $k_{2r}{'}$ may be chosen with the same properties
of the maps $g_r$ and $k_{2r} $ of Theorem 5.1. With all this observations
it is possible to conclude the proof of the right hand side inclusion of the first
statement.\par

\indent The second statement may be proved by means of a similar
argument.
\end{proof}

\bibliographystyle{amsplain}

\vskip.5cm

\noindent Enrico Boasso\par
\noindent E-mail address: enrico\_odisseo@yahoo.it

\end{document}